\documentclass[11pt]{amsart}

\usepackage[T1]{fontenc}
\usepackage[utf8]{inputenc}
\usepackage[english]{babel}
\usepackage{amsmath}
\usepackage{amsfonts}
\usepackage{amssymb}
\usepackage{amsthm}
\usepackage{enumitem}
\usepackage{mymacros}
\usepackage{hyperref}

\newcommand{\sz}{Szeg\H{o}  }

\renewcommand{\MR}[1]{} 

\title{The Smirnov class for spaces with the complete Pick property}

\author[A. Aleman]{Alexandru Aleman}
\address{Lund University, Mathematics, Faculty of Science, P.O. Box 118, S-221 00 Lund, Sweden}
\email{alexandru.aleman@math.lu.se}

\author[M. Hartz]{Michael Hartz}
\address{Department of Mathematics, Washington University in St. Louis, One Brookings Drive,
St. Louis, MO 63130, USA}
\email{mphartz@wustl.edu}
\thanks{M.H. was partially supported by an Ontario Trillium Scholarship and a Feodor Lynen Fellowship}

\author[J. M\raise.5ex\hbox{c}Carthy]{John E. M\raise.5ex\hbox{c}Carthy}
\address{Department of Mathematics, Washington University in St. Louis, One Brookings Drive,
St. Louis, MO 63130, USA}
\email{mccarthy@wustl.edu}
\thanks{J.M. was partially supported by National Science Foundation Grant DMS 1565243}

\author[S. Richter]{Stefan Richter}
\address{Department of Mathematics, University of Tennessee, 1403 Circle Drive, Knoxville, TN 37996-1320, USA}
\email{richter@math.utk.edu}

\keywords{Smirnov class, complete Pick space, quotient of multipliers, zero sets, corona theorem}
\subjclass[2010]{Primary 46E22; Secondary 30H15, 30H80}

\begin{document}

\begin{abstract}
  We show that every function in a reproducing kernel Hilbert space with a normalized
  complete Pick kernel is the quotient of a multiplier and a cyclic multiplier.
  This extends a theorem of Alpay, Bolotnikov and Kaptano\u{g}lu.
  We explore various consequences of this result regarding zero sets, spaces on compact sets and
  Gleason parts. In particular, using a construction of Salas, we exhibit a
  rotationally invariant complete Pick space of analytic functions on the unit disc
  for which the corona theorem fails.
\end{abstract}

\maketitle

\section{Introduction}
\label{sec:intro}

The Smirnov class in the unit disc $\bD$ can be characterized as the space
\begin{equation}
  \label{eqa1}
  N^+ \ = \ \{  \varphi/\psi \ : \  \varphi, \psi \in H^\infty, \ \psi \text{ outer} \},
\end{equation}
where $H^\infty$ denotes the algebra of bounded analytic functions on $\bD$.
Alternatively, it is the space of analytic functions on $\bD$ for which the functions
\begin{equation*}
 \theta \mapsto \log ( 1+  \vert f(re^{i\theta}) \vert )
\end{equation*}
are uniformly integrable over $[0,2 \pi]$ for all $0 < r < 1$.
The class occurs frequently in function theory \cite{Duren70,Garnett07,Nikolskiui86}, and has interesting connections to operator theory --- see, e.g. \cite{Helson90,HMN+04}. It contains $H^p$ for all $ p > 0$.

The algebra $H^\infty$ is the multiplier algebra of the Hardy space $H^2$,
which is the  Hilbert function space on $\bD$  which has the \sz kernel
\[
  s(z,w)  = \frac{1}{1- z \ol{w}}
\]
as its reproducing kernel.
Many interesting properties of $H^\infty$ and $H^2$ can be shown to follow from the fact that the \sz kernel is a complete Pick kernel --- see the book \cite{AM02} for a comprehensive account on this topic.

We shall give a definition of complete Pick spaces in Section~\ref{sec:prelim} below. For now,
we simply mention that in addition to the Hardy space $H^2$, the Dirichlet space and the
Drury-Arveson space, which has been studied by many different authors over the years
\cite{Lubin76,Drury78,AP95,DP98,AM00a,Arveson98},
satisfy this property.

Suppose that $\cH$ is a Hilbert
function space on a set $X$ whose kernel $k$ is normalized at $x_0$ (this means that $k(x,x_0) = 1$ for every $x$).
Let $\Mult(\cH)$ denote the multiplier algebra of $\cH$. A multiplier $\psi$ is said to be cyclic if $\psi \cH$
is dense in $\cH$.
We define
\begin{equation*}
  N^+(\cH) = \Big \{ \frac{\varphi}{\psi}: \varphi, \psi \in \Mult(\cH) \text{ and } \psi \text{ is cyclic} \Big\}.
\end{equation*}
(Taking $\cH = H^2$, we recover the classical Smirnov class.)
Observe that a cyclic multiplier does not vanish anywhere, so the quotient $\varphi/\psi$ is defined on all of $X$.
Moreover, since the product of any two cyclic multipliers is cyclic (see Subsection \ref{ss:cyclic}),
$N^+(\cH)$ is an algebra.

Using the inner-outer factorization of functions in $H^2$, one can show that $H^2 \subset N^+$.
On the other hand, there are functions in the Bergman space $L^2_a$
on $\bD$ which do not possess boundary radial
limits anywhere on $\partial \bD$, and hence are not
quotients of two functions in $H^\infty = \Mult(L^2_a)$.
Thus, $L^2_a$ is not contained in $N^+(L^2_a)$.
The following result shows that the inclusion $\cH \subset N^+(\cH)$ is valid
for every complete Pick space $\cH$ with a normalized kernel.
It was shown in the case of the Drury-Arveson space on a finite dimensional ball
by Alpay, Bolotnikov and Kaptano\u{g}lu
\cite[Theorem 10.3]{ABK02}, but their proof can be extended to general complete Pick spaces.

\begin{thm}
  \label{thm:smirnov_intro}
  Let $\cH$ be a
   complete Pick space on $X$ whose kernel is normalized at $x_0 \in X$ and let $f \in \cH$ with $||f||_{\cH} \le 1$.
  Then
  there are $\varphi,\psi \in \Mult(\cH)$ of multiplier norm at most $1$ with $\psi(x_0) = 0$
  such that
  \begin{equation*}
    f = \frac{\varphi}{1 - \psi}.
  \end{equation*}
  In particular, $\cH \subset N^+(\cH)$.
\end{thm}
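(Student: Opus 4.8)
The plan is to prove the displayed factorization $f=\varphi/(1-\psi)$ for $\|f\|_{\cH}\le 1$ by a transfer‑function (``lurking isometry'') argument in the spirit of Alpay--Bolotnikov--Kaptano\u{g}lu, and then to deduce $\cH\subset N^+(\cH)$. For the set‑up: since $k$ is a normalized complete Pick kernel, $1-1/k$ is positive semidefinite, so there are a Hilbert space $\cD$ and a map $b\colon X\to\mathbb{B}(\cD)$ into the open unit ball with $k(x,y)^{-1}=1-\langle b(x),b(y)\rangle$, and the normalization at $x_0$ forces $b(x_0)=0$. On the other hand, $\|f\|_{\cH}\le 1$ and Cauchy--Schwarz in $\cH$ (applied to $f$ against finite linear combinations of kernel functions) show that $k(x,y)-f(x)\overline{f(y)}$ is a positive semidefinite kernel on $X$; write it as $\langle E(x),E(y)\rangle_{\cE}$ and set $G(x)=(f(x),E(x))\in\mathbb{C}\oplus\cE$, so that $k(x,y)=\langle G(x),G(y)\rangle$ and $f(x)=\langle G(x),\epsilon\rangle$ with $\epsilon\in\mathbb{C}\oplus\cE$ the first coordinate vector.

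Combining the two facts, the identity $k(x,y)\bigl(1-\langle b(x),b(y)\rangle\bigr)=1$ becomes
\[
  \langle G(x),G(y)\rangle = 1 + \bigl\langle b(x)\otimes G(x),\, b(y)\otimes G(y)\bigr\rangle ,
\]
so the map $G(x)\mapsto(1,\,b(x)\otimes G(x))$ is isometric on $\overline{\operatorname{span}}\{G(x):x\in X\}$ and, after enlarging the ambient space (possible because $\cD\neq\{0\}$), extends to a unitary colligation $U=\binom{C}{A}\colon \cL\to\mathbb{C}\oplus(\cD\otimes\cL)$ with $\mathbb{C}\oplus\cE\subseteq\cL$ and $UG(x)=(1,\,b(x)\otimes G(x))$ for all $x$. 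Writing $\tau_x\colon\cL\to\cD\otimes\cL$ for multiplication by $b(x)$ (a strict contraction, since $\|b(x)\|<1$), the relation $UG(x)=(1,\tau_xG(x))$ unwinds to $G(x)=C^{*}+A^{*}\tau_xG(x)$, i.e.
\[
  G(x)=(I-A^{*}\tau_x)^{-1}C^{*}, \qquad f(x)=\bigl\langle(I-A^{*}\tau_x)^{-1}C^{*},\ \epsilon\bigr\rangle ,
\]
with $C^{*}=G(x_0)$; thus $f$ is realized as a transfer function of $U$.

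From this colligation one extracts the required multipliers by the usual de Branges--Rovnyak construction: $U$ produces a contractive scalar multiplier $1-\psi$ (its characteristic function, built from the blocks of $U$ and the distinguished unit vector $C^{*}=G(x_0)$), with $\psi(x_0)=0$ because $b(x_0)=0$ makes $\tau_{x_0}=0$, so that the transfer function of $\psi$ has vanishing constant term; a resolvent computation with $(I-A^{*}\tau_x)^{-1}$ then identifies $\varphi:=f\,(1-\psi)$ as a contractive multiplier and yields $f=\varphi/(1-\psi)$. I expect this last step --- producing $\psi$ and $\varphi$ from the single unitary $U$ with all the stated norm and normalization properties, and verifying the fractional identity --- to be the main obstacle; the dimension bookkeeping needed to complete the lurking isometry to a genuine unitary is routine.

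For the final assertion $\cH\subset N^+(\cH)$, let $f\in\cH$ be arbitrary; we may assume $f\neq 0$, and replacing $f$ by $f/\|f\|_{\cH}$ reduces to the case already treated, producing $\varphi,\psi\in\Mult(\cH)$ with $\|\psi\|_{\Mult}\le 1$, $\psi(x_0)=0$ and $f/\|f\|_{\cH}=\varphi/(1-\psi)$. It remains to see that $1-\psi$ is cyclic. Since $\|\psi\|_{\Mult}\le 1$ we have $M_\psi^{*}k_x=\overline{\psi(x)}\,k_x$ and $|\psi(x)|\le 1$ for every $x$; if $|\psi(x_1)|=1$ for some $x_1$, then the unimodular eigenvalue $\overline{\psi(x_1)}$ of the contraction $M_\psi^{*}$ forces $M_\psi k_{x_1}=\psi(x_1)k_{x_1}$, hence $(\psi-\psi(x_1))k_{x_1}\equiv 0$ on $X$, and since a normalized complete Pick kernel vanishes nowhere this means $\psi\equiv\psi(x_1)$, contradicting $\psi(x_0)=0$. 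Therefore $\psi$ omits the value $1$, so $1-\psi$ vanishes nowhere; and if $u\in\cH$ satisfies $M_\psi^{*}u=u$, then $\langle u,M_\psi u\rangle=\|u\|^{2}$ together with $\|M_\psi\|\le 1$ forces $M_\psi u=u$ by Cauchy--Schwarz, so $(1-\psi)u\equiv 0$ and hence $u=0$. Thus $M_{1-\psi}$ has dense range, $1-\psi$ is cyclic, and $f=\|f\|_{\cH}\,\varphi/(1-\psi)\in N^+(\cH)$.
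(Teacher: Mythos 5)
Your set-up is sound and is indeed the raw material behind the Alpay--Bolotnikov--Kaptano\u{g}lu argument that the paper generalizes: the Agler--McCarthy representation $k(x,y)^{-1}=1-\langle b(x),b(y)\rangle$ with $b(x_0)=0$, the positivity of $k(x,y)-f(x)\ol{f(y)}$, and the lurking-isometry identity $\langle G(x),G(y)\rangle=1+\langle b(x)\otimes G(x),\,b(y)\otimes G(y)\rangle$ are all correct. But the proof has a genuine gap exactly where you flag ``the main obstacle'': you never actually construct $\psi$ and $\varphi$, nor verify that they are multipliers of norm at most $1$ satisfying $f(1-\psi)=\varphi$. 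That step is the entire content of the theorem, and the appeal to ``the usual de Branges--Rovnyak construction'' does not apply verbatim to the colligation you built. Your $U=\binom{C}{A}\colon\mathcal{L}\to\bC\oplus(\mathcal{D}\otimes\mathcal{L})$ realizes the pair $(k,f)$ and has no input copy of $\bC$, so it has no characteristic function in the usual sense; to extract a Schur-class $\psi$ one must repartition the spaces, and it is not clear from what you wrote which block of which operator is supposed to be $\psi$, why the resulting function multiplies $\cH$ contractively (this uses the complete Pick property again, not just the unitarity of $U$), or why $f(1-\psi)$ is a contractive multiplier rather than merely some function.

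The paper closes precisely this gap with one application of Leech's theorem for complete Pick spaces, i.e.\ the implication (i) $\Rightarrow$ (ii) of \cite[Theorem 8.57]{AM02}: setting $\Phi=(1,\,fb)$, one computes $k(z,w)\bigl(\Phi(z)\Phi(w)^*-f(z)\ol{f(w)}\bigr)=k(z,w)-f(z)\ol{f(w)}\ge 0$, so Leech's theorem yields a contractive column multiplier $\Psi=\binom{\varphi}{\widetilde{\Psi}}$ with $\Phi\Psi=f$, that is, $\varphi+fb\widetilde{\Psi}=f$; then $\psi:=b\widetilde{\Psi}$ is a product of contractive multipliers, $\psi(x_0)=0$ because $b(x_0)=0$, and $f(1-\psi)=\varphi$. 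Leech's theorem is itself proved by a lurking-isometry argument, so your strategy can in principle be completed, but the isometry has to be set up for the factorization problem $\Phi\Psi=f$, not for the realization of $f$ alone. Your treatment of the ``in particular'' clause, by contrast, is complete and correct, and matches the paper's Lemmas \ref{lem:max_mod} and \ref{lem:denominator_cyclic}: $|\psi|<1$ everywhere by a maximum-modulus argument resting on the non-vanishing of a normalized complete Pick kernel, so $1-\psi$ is zero-free, and $M_\psi^*u=u$ forces $M_\psi u=u$ and hence $u=0$, giving cyclicity of $1-\psi$.
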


We will provide a proof of Theorem~\ref{thm:smirnov_intro} in Section~\ref{sec:proof_main}.
This theorem applies in particular to the Dirichlet space,
which answers a question posed in \cite[Section 3]{Ross06} and at the end of \cite{MR15}.

The majority of this note is devoted to exploring consequences of this theorem.
Our first application concerns zero sets.
If $S$ is a class of functions on a set $X$, then a subset $Z$ of $X$ is called
a \emph{zero set for $S$} if there exists a function in $S$ that vanishes exactly on $Z$.
It is well known that the zero sets for
$H^2$ and for $H^\infty$ agree and are precisely the Blaschke
sequences, along with the entire disc $\bD$ (see \cite[Section II.2]{Garnett07}).
Zero sets for functions in $L^2_a$ can be much more complicated, for instance,
the union of two zero sets for $L^2_a$ need not be a zero set for $L^2_a$ \cite{Horowitz74}.
On the other hand, it was shown by Marshall and Sundberg \cite{marsun}, see also \cite[Corollary 9.39]{AM02},
that the zero sets for functions and for multipliers in the Dirichlet space agree.
We show that this fact extends to all complete Pick spaces, thereby providing a positive answer to
\cite[Question 9.28]{AM02} in the case of complete Pick kernels.
The case of Pick kernels which are not necessarily complete Pick kernels remains open.
We say that a reproducing kernel Hilbert space on $X$ is normalized if its kernel is normalized
at some point in $X$.

\begin{cor}
  \label{cor:zero_sets_intro}
  Let $\cH$ be a normalized complete Pick space. Then the zero sets for $\cH$, $\Mult(\cH)$
  and $N^+(\cH)$ agree. In particular, the union of two zero sets for $\cH$ is a zero set for $\cH$.
\end{cor}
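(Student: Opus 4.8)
The plan is to prove the three classes of zero sets coincide by a chain of inclusions, the heart of which is moving from $N^+(\cH)$ back to $\Mult(\cH)$. First I would dispose of the easy inclusions. Every multiplier is in $\cH$ (since $\cH$ is normalized, $1 \in \cH$, so $\psi = \psi \cdot 1 \in \cH$ for $\psi \in \Mult(\cH)$), hence every zero set for $\Mult(\cH)$ is a zero set for $\cH$. Likewise every zero set for $\Mult(\cH)$ is trivially a zero set for $N^+(\cH)$, since $\Mult(\cH) \subseteq N^+(\cH)$ (take denominator $\psi = 1$). So it remains to show: (i) every zero set for $\cH$ is a zero set for $N^+(\cH)$, and (ii) every zero set for $N^+(\cH)$ is a zero set for $\Mult(\cH)$.

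For (i): if $f \in \cH$ vanishes exactly on $Z$, rescale so $\|f\|_\cH \le 1$ and apply Theorem~\ref{thm:smirnov_intro} to write $f = \varphi/(1-\psi)$ with $\varphi, \psi \in \Mult(\cH)$, $\|\psi\|_{\Mult} \le 1$, $\psi(x_0) = 0$. Since $1 - \psi$ is a cyclic multiplier (it is the denominator of an element of $\cH \subseteq N^+(\cH)$; more directly, $\|\psi\|_{\Mult}\le 1$ with $\psi(x_0)=0$ forces $1-\psi$ to be cyclic — this is presumably established in the cyclic-multiplier subsection referenced in the excerpt), $f = \varphi/(1-\psi) \in N^+(\cH)$, and since $1-\psi$ never vanishes, $\varphi/(1-\psi)$ vanishes exactly where $\varphi$ does, namely exactly on $Z$. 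Thus $Z$ is a zero set for $N^+(\cH)$ — in fact this already shows $\varphi \in \Mult(\cH)$ vanishes exactly on $Z$, which is (ii) in this special case.

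The substantive step is (ii) in general: given $g = \varphi/\psi \in N^+(\cH)$ with $\varphi, \psi \in \Mult(\cH)$ and $\psi$ cyclic, vanishing exactly on $Z$, produce a single multiplier vanishing exactly on $Z$. Since $\psi$ is cyclic it has no zeros, so $g$ and $\varphi$ have the same zero set $Z$; hence $\varphi \in \Mult(\cH)$ already vanishes exactly on $Z$, and we are done. I would phrase this cleanly: a function in $N^+(\cH)$ and its numerator have the same zero set because the denominator is zero-free. Chaining everything: zero sets for $\Mult(\cH)$ $\subseteq$ zero sets for $\cH$ $\subseteq$ zero sets for $N^+(\cH)$ $\subseteq$ zero sets for $\Mult(\cH)$, so all three coincide.

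For the final assertion, given two zero sets $Z_1, Z_2$ for $\cH$, pick $f_i \in \cH$ vanishing exactly on $Z_i$. By the above there are $\varphi_i \in \Mult(\cH)$ vanishing exactly on $Z_i$; then $\varphi_1 \varphi_2 \in \Mult(\cH) \subseteq \cH$ vanishes exactly on $Z_1 \cup Z_2$, so $Z_1 \cup Z_2$ is a zero set for $\cH$. The main obstacle is really just the correct deployment of Theorem~\ref{thm:smirnov_intro} together with the fact that cyclic multipliers are zero-free (so denominators contribute no zeros); once those are in hand the argument is a short chain of inclusions, and I would be careful only about the normalization hypothesis ensuring $\Mult(\cH) \subseteq \cH$.
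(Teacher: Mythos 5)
Your proof is correct and follows essentially the same route as the paper: the chain $\Mult(\cH) \subset \cH \subset N^+(\cH)$ (the second inclusion from Theorem~\ref{thm:smirnov_intro}), the observation that a zero set for $N^+(\cH)$ is a zero set for $\Mult(\cH)$ because the cyclic denominator is zero-free, and the algebra property of $\Mult(\cH)$ for unions. The only superfluous part is re-deriving cyclicity of $1-\psi$; the inclusion $\cH \subset N^+(\cH)$ stated in Theorem~\ref{thm:smirnov_intro} already suffices.
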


If $\cH$ is a Hilbert function space on $X$, then it may happen that
all functions in $\cH$ extend uniquely to a larger set.
For instance, we could start with a space of analytic functions on $\bD$,
let $X = \frac{1}{2} \bD$ and let $\cH$ be the restriction of $\cH$ to $X$,
so that every function in $\cH$ extends uniquely to an analytic function on $\bD$.
The notion of partially multiplicative functional (or generalized kernel function) allows
one to find the largest possible domain of definition for the functions in $\cH$. A non-zero
bounded functional $\rho$ on $\cH$ is said to be \emph{partially multiplicative}
if $\rho(f g) = \rho(f) \rho(g)$ whenever $f,g \in \cH$ such that $f g \in \cH$.
Clearly, evaluation at each point of $X$ gives rise to a partially multiplicative
functional on $\cH$. We say that \emph{$X$ is a maximal domain for $\cH$} if conversely, every partially
multiplicative functional is given by evaluation at a point in $X$ (this is called \emph{$\cH$ is algebraically
consistent} by Cowen and MacCluer, see \cite[Definition 1.5]{CM95}).
It was shown in \cite[Section 3]{MS15} that every Hilbert function space can be considered as a space of functions on the
set of partially multiplicative
functionals, and this set is a maximal domain for $\cH$.
For more discussion, the reader is referred to
\cite[Definition 1.5]{CM95}, \cite[Section 5]{Hartz15} and \cite[Section 3]{MS15}.
In the context of normalized complete Pick spaces, there is another notion of partially multiplicative functional,
and we show that the two notions agree. This answers a question that was left open in \cite{Hartz15}.

\begin{cor}
\label{pra3}
  Let $\cH$ be a normalized complete Pick space and let $\rho$ be a bounded non-zero functional on $\cH$.
  Then the following are equivalent.
  \begin{enumerate}[label=\normalfont{(\roman*)}]
    \item $\rho(\varphi g) = \rho(\varphi) \rho(g)$ for all $\varphi \in \Mult(\cH)$ and all $g \in \cH$.
    \item $\rho(f g) = \rho(f) \rho(g)$ whenever $f,g \in \cH$ such that $f g \in \cH$.
  \end{enumerate}
\end{cor}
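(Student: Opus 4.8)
The plan is to establish the non-trivial implication (i) $\Rightarrow$ (ii), deducing full partial multiplicativity of $\rho$ from partial multiplicativity against multipliers by means of the Smirnov-type factorization in Theorem~\ref{thm:smirnov_intro}. The reverse implication (ii) $\Rightarrow$ (i) is essentially free: since the kernel of $\cH$ is normalized at $x_0$, the constant function $1 = k(\cdot,x_0)$ belongs to $\cH$, hence $\Mult(\cH) = \Mult(\cH)\cdot 1 \subseteq \cH$; so for $\varphi\in\Mult(\cH)$ and $g\in\cH$ both $\varphi$ and $\varphi g$ lie in $\cH$, and (ii) applied to the pair $(\varphi,g)$ gives $\rho(\varphi g)=\rho(\varphi)\rho(g)$.

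Assume now (i). First I would collect three preliminary observations. \emph{(a)} $\rho(1)=1$: since $\rho\neq0$, fix $g\in\cH$ with $\rho(g)\neq0$; applying (i) with $\varphi=1$ gives $\rho(g)=\rho(1)\rho(g)$. \emph{(b)} $\rho$ restricts to a multiplicative linear functional on $\Mult(\cH)$: for $\varphi,\varphi'\in\Mult(\cH)\subseteq\cH$, (i) with $g=\varphi'$ gives $\rho(\varphi\varphi')=\rho(\varphi)\rho(\varphi')$. \emph{(c)} If $\eta\in\Mult(\cH)$ is a \emph{cyclic} multiplier, then $\rho(\eta)\neq0$: were $\rho(\eta)=0$, then (i) would give $\rho(\eta g)=\rho(\eta)\rho(g)=0$ for every $g\in\cH$, and since $\eta\cH$ is dense in $\cH$ and $\rho$ is bounded, $\rho$ would vanish identically, contrary to assumption.

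Next I would prove $\rho(fg)=\rho(f)\rho(g)$ for arbitrary $f,g\in\cH$ with $fg\in\cH$. If $f=0$ or $g=0$ this is trivial, and since the identity is homogeneous separately in $f$ and in $g$ we may assume $\|f\|_\cH\le1$ and $\|g\|_\cH\le1$. By Theorem~\ref{thm:smirnov_intro} write $f=\varphi_1/\eta_1$ and $g=\varphi_2/\eta_2$, where $\varphi_i\in\Mult(\cH)$ and $\eta_i:=1-\psi_i\in\Mult(\cH)$ is the cyclic multiplier produced by the theorem (its cyclicity is precisely what makes the inclusion $\cH\subseteq N^+(\cH)$ hold). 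Then $\eta_1 f=\varphi_1$ and $\eta_2 g=\varphi_2$ in $\cH$, and, comparing values pointwise on $X$ and noting that all functions involved lie in $\cH$, also $\eta_1\eta_2\,(fg)=\varphi_1\varphi_2$ in $\cH$. Applying $\rho$ and using (i) together with observation (b):
\begin{align*}
  \rho(\varphi_1) &= \rho(\eta_1 f) = \rho(\eta_1)\,\rho(f),\\
  \rho(\varphi_2) &= \rho(\eta_2 g) = \rho(\eta_2)\,\rho(g),\\
  \rho(\eta_1)\,\rho(\eta_2)\,\rho(fg) &= \rho(\eta_1\eta_2)\,\rho(fg) = \rho\big(\eta_1\eta_2\,(fg)\big) = \rho(\varphi_1\varphi_2) = \rho(\varphi_1)\,\rho(\varphi_2).
\end{align*}
Substituting the first two lines into the last yields $\rho(\eta_1)\rho(\eta_2)\rho(fg) = \rho(\eta_1)\rho(\eta_2)\rho(f)\rho(g)$, and cancelling $\rho(\eta_1)\rho(\eta_2)\neq0$ (observation (c)) gives the claim.

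The only substantial ingredient is Theorem~\ref{thm:smirnov_intro}, which supplies the factorization with a \emph{cyclic} denominator; the rest is bookkeeping. The step I would be most careful with is the assertion that $\eta_1\eta_2\,(fg)=\varphi_1\varphi_2$ as \emph{elements of $\cH$} (not merely as functions on $X$): this holds because $\eta_1\eta_2\in\Mult(\cH)$ and $fg\in\cH$, so $\eta_1\eta_2\,(fg)\in\cH$, while $\varphi_1\varphi_2\in\Mult(\cH)\subseteq\cH$, and two elements of $\cH$ agreeing pointwise on $X$ are equal; only then may (i) legitimately be applied to the pair $(\eta_1\eta_2,\,fg)$. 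A secondary point is that one genuinely needs the cyclicity of $\eta_i$ (rather than just $\|\psi_i\|_{\Mult}\le1$ and $\psi_i(x_0)=0$) in order to guarantee $\rho(\eta_i)\neq0$.
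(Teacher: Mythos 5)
Your argument is correct and follows essentially the same route as the paper: apply Theorem~\ref{thm:smirnov_intro} to get a factorization with cyclic denominator, observe that cyclicity forces $\rho(\eta)\neq 0$, and then cancel. The only (inessential) difference is that the paper factors just $f$ and chains $\rho(\psi)\rho(fg)=\rho(\psi fg)=\rho(\varphi g)=\rho(\varphi)\rho(g)=\rho(\psi f)\rho(g)$, so it never needs to factor $g$ or invoke multiplicativity of $\rho$ on $\Mult(\cH)$.
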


A classical notion in uniform algebras is that of a Gleason part in the maximal ideal space,
see \cite{Gleason57} and \cite[Chapter VI]{Gamelin69}. For instance, Gleason parts play an important
role in the study of the maximal ideal space of $H^\infty$, see for example \cite[Chapter X]{Garnett07}.
This notion can be generalized to multiplier algebras.
Given two characters $\rho_1,\rho_2$ on a multiplier
algebra, we write $\rho_1 \sim \rho_2$ if $||\rho_1 - \rho_2|| < 2$. This defines
an equivalence relation on the maximal ideal space of the multiplier algebra (see Lemma \ref{lem:Gleason_parts}),
and the equivalence classes are called \emph{Gleason parts}.
In this context, we prove in Section \ref{sec:gleason} the following result.

\begin{prop}
\label{prop:gleason_introduction}
  Let $\cH$ be a normalized complete Pick space on a set $X$. Then the following are equivalent.
  \begin{enumerate}[label=\normalfont{(\roman*)}]
    \item $X$ is a maximal domain for $\cH$.
    \item Every weak-$*$ continuous character on $\Mult(\cH)$ is given by evaluation at a point in $X$.
    \item The characters of evaluation at points in $X$ form a Gleason part in the maximal
      ideal space of $X$.
  \end{enumerate}
\end{prop}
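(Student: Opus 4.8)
The plan is to route everything through the multiplier algebra and to identify the weak-$*$ continuous characters of $\Mult(\cH)$ in two ways. For $x\in X$ let $\rho_x(\varphi)=\varphi(x)$ be the evaluation character; it is weak-$*$ continuous because $\rho_x(\varphi)=\langle M_\varphi e_x,e_x\rangle$ with $e_x=k(\cdot,x)/k(x,x)^{1/2}$. I would establish two statements: \emph{(A) $X$ is a maximal domain for $\cH$ if and only if every weak-$*$ continuous character of $\Mult(\cH)$ equals some $\rho_x$} — this is precisely (i)$\Leftrightarrow$(ii); and \emph{(B) the Gleason part of $\rho_{x_0}$ in the maximal ideal space of $\Mult(\cH)$ is exactly the set of all weak-$*$ continuous characters}. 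Granting (A) and (B), the equivalence (ii)$\Leftrightarrow$(iii) is purely formal: if (ii) holds then, since each $\rho_x$ is weak-$*$ continuous, $\{\rho_x:x\in X\}$ is exactly the set of weak-$*$ continuous characters, which by (B) is the Gleason part of $\rho_{x_0}$, so it is a Gleason part; conversely, if $\{\rho_x:x\in X\}$ is a Gleason part then, containing $\rho_{x_0}$, it must be the Gleason part of $\rho_{x_0}$, which by (B) is the set of weak-$*$ continuous characters, giving (ii).

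For (A) the bridge is the correspondence between partially multiplicative functionals on $\cH$ and weak-$*$ continuous characters on $\Mult(\cH)$. If $\rho\neq 0$ is partially multiplicative on $\cH$, it cannot vanish on $\Mult(\cH)$: writing $f=\varphi/(1-\psi)$ as in Theorem~\ref{thm:smirnov_intro}, so that $f=\varphi+\psi f$ in $\cH$, Corollary~\ref{pra3} gives $\rho(f)=\rho(\varphi)+\rho(\psi)\rho(f)$, forcing $\rho\equiv 0$ whenever $\rho|_{\Mult(\cH)}=0$. Hence $\rho|_{\Mult(\cH)}$ is a character (again by Corollary~\ref{pra3}), and it is weak-$*$ continuous since $\varphi\mapsto M_\varphi 1=\varphi$ is weak-$*$-to-weak continuous into $\cH$. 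Conversely, a weak-$*$ continuous character $\chi$ has a nonzero joint eigenvector $v$ with $M_\varphi^*v=\overline{\chi(\varphi)}\,v$ for all $\varphi$; since the multipliers are dense in $\cH$ (for instance because, by the Agler--McCarthy theorem, $\cH$ is a coisometric quotient of a Drury--Arveson space in which the polynomials are dense multipliers), $\langle v,k(\cdot,x_0)\rangle=v(x_0)\neq 0$, and $\rho(f):=\langle f,v\rangle/\overline{v(x_0)}$ is, by Corollary~\ref{pra3}, a partially multiplicative functional restricting to $\chi$. This correspondence respects point evaluations: if a partially multiplicative $\rho$ restricts to $\rho_x$ on $\Mult(\cH)$, then for $f=\varphi/(1-\psi)$ as above one gets $\rho(f)(1-\psi(x))=\varphi(x)$, and $|\psi(x)|<1$ by positivity of the $2\times 2$ Pick matrix of $\psi$ on $\{x_0,x\}$ (using $\psi(x_0)=0$, $\|\psi\|_{\Mult(\cH)}\le 1$), so $\rho(f)=f(x)$. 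This gives (A).

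For (B) I would use the Agler--McCarthy structure theory. Write $k(x,y)=(1-\langle b(x),b(y)\rangle)^{-1}$ with $b(x_0)=0$, fix an orthonormal basis of the target space, and let $b_i\in\Mult(\cH)$ be the coordinates of $b$; then $(M_{b_i})_i$ is a row contraction with $I-\sum_i M_{b_i}M_{b_i}^*$ the rank-one projection onto the constants. For a character $\chi$ set $s(\chi)^2:=\sum_i|\chi(b_i)|^2=\sup_{\|\mu\|\le 1}|\chi(\psi_\mu)|^2\le 1$, where $\psi_\mu:=\sum_i\overline{\mu_i}\,b_i$ is a multiplier of norm at most $1$ with $\psi_\mu(x_0)=0$. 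If $\chi$ is weak-$*$ continuous with unit joint eigenvector $v$, then $s(\chi)^2=\langle(\sum_i M_{b_i}M_{b_i}^*)v,v\rangle=1-|v(x_0)|^2<1$, and $\|\chi-\rho_{x_0}\|\le 2\sqrt{1-|v(x_0)|^2}<2$, so $\chi$ lies in the Gleason part of $\rho_{x_0}$. For the converse, suppose $\|\chi-\rho_{x_0}\|<2$; I claim $s(\chi)<1$. For a unit vector $\mu$ and $a\in\bD$, the function $\varphi_a:=(\psi_\mu-a)(1-\overline{a}\,\psi_\mu)^{-1}$ is a multiplier of norm at most $1$ (von Neumann's inequality applied to the contraction $M_{\psi_\mu}$), with $\varphi_a(x_0)=-a$ and $\chi(\varphi_a)=(t-a)/(1-\overline{a}\,t)$ where $t:=\chi(\psi_\mu)$; hence $|(t-a)/(1-\overline{a}\,t)+a|\le\|\chi-\rho_{x_0}\|$ for all $a\in\bD$. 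If $|t|=1$, taking $a=rt$ with $r\uparrow 1$ drives the left side to $2$, a contradiction; so $|\chi(\psi_\mu)|<1$ for every unit $\mu$, whence $s(\chi)<1$. Finally, $s(\chi)<1$ forces $\chi$ to be weak-$*$ continuous: evaluating an Agler--McCarthy transfer-function realization of an arbitrary multiplier of norm at most $1$ shows that $\chi$ is the restriction to $\Mult(\cH)$ of evaluation at the point $(\chi(b_i))_i$ of the open ball in the enveloping Drury--Arveson space (the relevant Neumann series converges since $s(\chi)<1$), and such an evaluation is weak-$*$ continuous. This proves (B), and with it the proposition.

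The step I expect to be the main obstacle is the last implication in (B), that $\|\chi-\rho_{x_0}\|<2$ forces $\chi$ to be weak-$*$ continuous. One must recognize that weak-$*$ continuity is governed by the dichotomy $s(\chi)<1$ versus $s(\chi)=1$ and then extract $s(\chi)<1$ from the Gleason-part hypothesis; the naive estimate $|\chi(\psi_\mu)|\le\|\chi-\rho_{x_0}\|$ only gives $s(\chi)\le 2$, so the Möbius amplification through $\varphi_a$ is essential. After that, recovering the joint eigenvector from $s(\chi)<1$ is a matter of invoking the realization theorem, and the remaining ingredients — Theorem~\ref{thm:smirnov_intro}, Corollary~\ref{pra3}, density of multipliers, and the $2\times 2$ Pick estimate — are routine.
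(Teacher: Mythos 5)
Your architecture (a character-level dichotomy: weak-$*$ continuous characters versus the Gleason part of $\rho_{x_0}$, bridged to maximal domains via partially multiplicative functionals) is essentially the paper's, and several pieces are correct: the restriction of a partially multiplicative functional to $\Mult(\cH)$ is a weak-$*$ continuous character, the argument that such a functional agreeing with $\rho_x$ on multipliers must be evaluation at $x$ (via Theorem \ref{thm:smirnov_intro} and Corollary \ref{pra3}), the vector-state estimate $\|\chi-\rho_{x_0}\|\le 2\sqrt{1-|v(x_0)|^2}$, and the M\"obius amplification (which in fact just re-derives the equivalence (i)$\Leftrightarrow$(ii) of Lemma \ref{lem:Gleason_parts}). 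But two load-bearing steps are asserted, not proved, and they are exactly the hard content. First, you invoke twice (in (A) and in the forward direction of (B)) that every weak-$*$ continuous character $\chi$ of $\Mult(\cH)$ has a joint eigenvector $v$ with $M_\varphi^*v=\ol{\chi(\varphi)}v$. This is not a soft general fact about dual operator algebras; in the present framework it is equivalent to saying that $\chi$ extends to a bounded partially multiplicative functional on $\cH$, i.e., it is precisely implication (ii)$\Rightarrow$(i) of Proposition \ref{prop:gleason}, which the paper can only reach through the detour (ii)$\Rightarrow$(iii)$\Rightarrow$(i): weak-$*$ compactness of the unit ball of $\ker\delta_0$ plus Lemma \ref{lem:max_mod} and the powers $\psi^n$ to get $\|\chi|_{\ker\delta_0}\|<1$, and then the Smirnov representation $f=\varphi/(1-\psi)$ to build the extension $\widetilde\chi(f)=\chi(\varphi)/(1-\chi(\psi))$ with $\|\widetilde\chi\|\le (1-\alpha)^{-1}$, whose Riesz vector is the eigenvector you want. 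Treating the eigenvector as a black box therefore assumes a substantial part of what is to be proved; your (B)-forward could be repaired by transplanting the paper's $\psi^n$ argument, but (i)$\Rightarrow$(ii) of the proposition genuinely needs the extension, so the gap is real as written.

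Second, the last step of (B) --- ``$s(\chi)<1$ forces $\chi$ to be weak-$*$ continuous'' by evaluating a transfer-function realization at $(\chi(b_i))_i$ --- glosses over the actual obstacle. The realization/Neumann series for a contractive multiplier converges pointwise on $X$ (equivalently in the weak-$*$ topology), not in the multiplier norm, and when $\dim\cE=\infty$ even the individual homogeneous blocks are infinite sums of monomials in the $b_i$ with no norm convergence; since $\chi$ is a priori only norm continuous, you cannot push $\chi$ through the series --- doing so would presuppose the weak-$*$ continuity you are trying to establish. (Convergence of the scalar series at the point $(\chi(b_i))_i$, which is what ``$s(\chi)<1$'' gives, shows the proposed value makes sense, not that it equals $\chi(\varphi)$; and well-definedness and weak-$*$ continuity of ``evaluation at $(\chi(b_i))_i$'' on $\Mult(\cH)$ also needs an argument when that point lies outside $b(X)$.) The paper's route avoids all of this: from $\alpha=\|\chi|_{\ker\delta_0}\|<1$ it constructs the bounded partially multiplicative extension directly from Theorem \ref{thm:smirnov_intro}, and weak-$*$ continuity then falls out of $\chi(\varphi)=\widetilde\chi(M_\varphi 1)$. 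I recommend replacing both black boxes with these arguments; with them, your formal derivation of (ii)$\Leftrightarrow$(iii) from (A) and (B) goes through.
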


The equivalence of (i) and (ii) in Proposition \ref{prop:gleason_introduction} was already observed in \cite[Proposition 5.6]{Hartz15} (using
(i) in Corollary \ref{pra3} as the definition of partially multiplicative functional and under the assumption
that $\cH$ separates the points in $X$).

In Section~\ref{sec:compact}
we study complete Pick spaces of continuous functions on compact sets --- for example, spaces
of analytic functions on the disc that extend to be continuous on $\overline{\bD}$.
In particular, we investigate the validity of a corona theorem in this context.
Carleson's famous corona theorem for $H^\infty$ \cite{Carleson62} asserts that the open unit disc is dense
in the maximal ideal space of $H^\infty$. This was extended to multiplier algebras
of certain Besov-Sobolev spaces on the unit ball in finite dimensions,
including the multiplier algebra of the Drury-Arveson space, by Costea, Sawyer and Wick \cite{CSW11}.
If $\Mult(\cH)$ consists of continuous functions on a compact set $X$, then
a corona theorem for $\Mult(\cH)$ would assert that the maximal ideal space of $\Mult(\cH)$
is equal to the characters of evaluation at points of $X$.

\begin{prop}
  \label{pra4}
  Let $\cH$ be a
  normalized complete Pick space of continuous functions
  on a compact set $X$ such that $X$ is a maximal domain for $\cH$ and
  such that $\Mult(\cH)$ separates the points of $X$. Then the following are equivalent.
  \begin{enumerate}[label=\normalfont{(\roman*)}]
    \item $\Mult(\cH) = \cH$ as vector spaces.
    \item The corona theorem holds for $\Mult(\cH)$, that is,
      the maximal ideal space of $\Mult(\cH)$
      is $X$.
    \item The one-function corona theorem holds for $\Mult(\cH)$, that is, if
      $\varphi \in \Mult(\cH)$ is non-vanishing, then $1/ \varphi \in \Mult(\cH)$.
  \end{enumerate}
\end{prop}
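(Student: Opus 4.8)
The plan is to prove the cycle of implications (i)~$\Rightarrow$~(ii)~$\Rightarrow$~(iii)~$\Rightarrow$~(i), using Theorem~\ref{thm:smirnov_intro} for the last step and Proposition~\ref{prop:gleason_introduction} for the first. Throughout I would use several routine preliminaries: since the kernel is normalized at $x_0$, the constant function $1 = k(\cdot,x_0)$ lies in $\cH$ with $\|1\|_\cH = 1$, so the identity $\varphi = M_\varphi 1$ shows $\Mult(\cH) \subseteq \cH$ with $\|\varphi\|_\cH \le \|\varphi\|_{\Mult(\cH)}$; that $\Mult(\cH)$ is a commutative unital Banach algebra under the multiplier norm; and that, as observed in the introduction, a cyclic multiplier vanishes nowhere. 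I would also record once and for all that the map $x \mapsto \mathrm{ev}_x$ embeds $X$ into the maximal ideal space $\mathcal{M}(\Mult(\cH))$: it is injective because $\Mult(\cH)$ separates points, it is continuous for the Gelfand topology because the elements of $\Mult(\cH)$ are continuous functions on $X$, and $X$ is compact and Hausdorff (Hausdorffness follows from the separation and continuity). Hence this map is a homeomorphism onto its image, and condition (ii) is precisely the assertion that this image exhausts $\mathcal{M}(\Mult(\cH))$.

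The two easy implications come first. For (ii)~$\Rightarrow$~(iii): if $\varphi \in \Mult(\cH)$ is non-vanishing, then $\mathrm{ev}_x(\varphi) = \varphi(x) \neq 0$ for every $x \in X$, so under (ii) the element $\varphi$ lies in no maximal ideal of $\Mult(\cH)$; by Gelfand theory it is invertible, i.e. $1/\varphi \in \Mult(\cH)$. For (iii)~$\Rightarrow$~(i): we always have $\Mult(\cH) \subseteq \cH$, and conversely, given $f \in \cH$, Theorem~\ref{thm:smirnov_intro} yields $\cH \subseteq N^+(\cH)$, so $f = \varphi/\psi$ with $\varphi, \psi \in \Mult(\cH)$ and $\psi$ cyclic, whence $\psi$ is non-vanishing; then $1/\psi \in \Mult(\cH)$ by (iii), and therefore $f = \varphi\cdot(1/\psi) \in \Mult(\cH)$. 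Thus $\cH = \Mult(\cH)$ as vector spaces.

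The substantive implication is (i)~$\Rightarrow$~(ii). Assuming $\Mult(\cH) = \cH$ as vector spaces, the inclusion $\Mult(\cH) \hookrightarrow \cH$ is a bounded linear bijection of Banach spaces, so by the open mapping theorem the multiplier norm and the Hilbert space norm are equivalent. Let $\rho$ be a character on $\Mult(\cH)$; it is automatically bounded for $\|\cdot\|_{\Mult(\cH)}$, hence for $\|\cdot\|_\cH$, so there is $h \in \cH$ with $\rho(g) = \langle g, h\rangle_\cH$ for all $g \in \cH = \Mult(\cH)$. Using $\varphi = M_\varphi 1$ we obtain $\rho(\varphi) = \langle M_\varphi 1, h\rangle_\cH$ for every $\varphi \in \Mult(\cH)$, and the functional $\psi \mapsto \langle M_\psi 1, h\rangle_\cH$ is weak-$*$ continuous on $\Mult(\cH)$, being the restriction of a vector functional on $B(\cH)$. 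Hence $\rho$ is a weak-$*$ continuous character; since $X$ is a maximal domain for $\cH$, Proposition~\ref{prop:gleason_introduction} forces $\rho = \mathrm{ev}_x$ for some $x \in X$. Therefore every character of $\Mult(\cH)$ is a point evaluation, and by the preliminary remarks $\mathcal{M}(\Mult(\cH)) = X$, which is (ii).

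The main obstacle is exactly the step inside (i)~$\Rightarrow$~(ii) of promoting an arbitrary character to a weak-$*$ continuous one, so that Proposition~\ref{prop:gleason_introduction} can be invoked; the device that makes this work is the identity $\varphi = M_\varphi 1$ combined with the reflexivity of the Hilbert space $\cH$, which represents every bounded functional on $\Mult(\cH)$ as an inner-product (hence weak-$*$ continuous) functional. The remaining ingredients are Gelfand theory for commutative unital Banach algebras, the containment $\cH \subseteq N^+(\cH)$ from Theorem~\ref{thm:smirnov_intro}, and the fact that cyclic multipliers are non-vanishing.
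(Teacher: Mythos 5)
Your proposal is correct, and two of the three implications ((ii)~$\Rightarrow$~(iii) via Gelfand theory, and (iii)~$\Rightarrow$~(i) via Theorem~\ref{thm:smirnov_intro} together with the non-vanishing of cyclic multipliers) coincide with the paper's proof, as does the preliminary identification of $X$ with a compact subset of the maximal ideal space. The one place where you genuinely deviate is inside (i)~$\Rightarrow$~(ii): after obtaining equivalence of the multiplier and Hilbert space norms from the open mapping theorem, the paper simply observes that a character $\rho$ on $\Mult(\cH)=\cH$ is already a bounded \emph{partially multiplicative} functional on $\cH$ (any product of two elements of $\cH$ lies in $\cH$, since they are multipliers, and multiplicativity of the character does the rest), so the definition of maximal domain immediately gives $\rho=\mathrm{ev}_x$ for some $x\in X$. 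You instead represent $\rho$ by a vector $h\in\cH$ via Riesz, note that $\varphi\mapsto\langle M_\varphi 1,h\rangle_\cH$ is weak-$*$ continuous, and then invoke the implication (i')~$\Rightarrow$~(ii') of Proposition~\ref{prop:gleason_introduction}. This is legitimate (that proposition is established before the present one, so there is no circularity), and it has the side benefit of showing that under (i) every character on $\Mult(\cH)$ is automatically weak-$*$ continuous; but it is heavier machinery than needed, since the cited implication of Proposition~\ref{prop:gleason_introduction} itself rests on Theorem~\ref{thm:smirnov_intro} and the Gleason part analysis, whereas the paper's direct appeal to the maximal domain property requires nothing beyond the norm equivalence.
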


As a consequence, using a very interesting example of Salas \cite{Salas81}, we exhibit a rotationally invariant
complete Pick space on $\overline{\bD}$ for which the one-function corona theorem (and hence the full corona theorem) fails.

\begin{thm}
  \label{thm:corona_intro}
  There exists a complete Pick space $\cH$ on $\ol{\bD}$ such that $\ol{\bD}$
  is a maximal domain for $\cH$ with a reproducing kernel
  of the form
  \begin{equation*}
    k(z,w) = \sum_{n=0}^\infty a_n (z \ol{w})^n
  \end{equation*}
  and such that the one-function corona theorem for $\Mult(\cH)$ fails. In particular, $\ol{\bD}$
  is a proper compact subset of the maximal ideal space of $\Mult(\cH)$.
\end{thm}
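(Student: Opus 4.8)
The plan is to combine three ingredients: Salas's construction of a weighted backward shift (or rather its adjoint, a weighted forward shift on $\ell^2$) that is hypercyclic and, crucially, such that some perturbation or structural feature prevents invertibility of a natural multiplier; the complete Pick machinery developed earlier; and Proposition~\ref{pra4}. First I would recast Salas's example in the language of reproducing kernel Hilbert spaces: we seek a weight sequence $(a_n)_{n\ge0}$ with $a_0 = 1$, $a_n > 0$, and $a_n \to 0$ slowly enough that the diagonal kernel $k(z,w) = \sum a_n (z\ol w)^n$ converges on $\ol{\bD} \times \ol{\bD}$ (so $\sum a_n < \infty$), yet the associated space $\cH$ still has the complete Pick property. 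Recall that a space with kernel $\sum a_n t^n$ is a complete Pick space precisely when $1 - 1/\big(\sum a_n t^n\big)$ has nonnegative Taylor coefficients (the Kaltenbäck--Pott / McCullough--Quiggin criterion); so the first real task is to exhibit a summable $(a_n)$ with $a_0=1$ satisfying this coefficient-positivity condition and for which the monomials are still an orthogonal basis of bona fide continuous functions on $\ol{\bD}$.

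Next I would verify that $\ol{\bD}$ is a maximal domain for this $\cH$ and that $\Mult(\cH)$ separates points. Separation is immediate since $z \in \Mult(\cH)$ (the shift is bounded because $a_{n+1}/a_n$ is bounded, which we arrange). For maximality of the domain, rotation invariance of the kernel together with the standard argument for radially-weighted spaces (any partially multiplicative functional is determined by its value on $z$, and that value must lie in $\ol{\bD}$ by a spectral-radius estimate on the shift, with the boundary values realized because $k$ extends continuously to $\ol{\bD}$) should give it; this is where I expect to lean on the rotational symmetry heavily, and it is a place where care is needed to rule out functionals "at infinity" — but summability of $(a_n)$ forces $k(z,z)$ to be bounded on $\ol{\bD}$, pinning things down. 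With (i) of Proposition~\ref{pra4} in hand we would be done, but the whole point of invoking Salas is that $\Mult(\cH) \ne \cH$ — so instead I would go after condition~(iii) directly and negate it.

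The heart of the matter, and the main obstacle, is producing a non-vanishing $\varphi \in \Mult(\cH)$ whose reciprocal is not a multiplier. Here is where Salas's example does the work: Salas constructed weighted shifts $T$ such that $I + T$ is hypercyclic, and hypercyclicity of $I+T$ (equivalently, the multiplication operator $1 + cz$ on $\cH$ for a suitable constant, or more precisely $(I+T)$ where $T$ is the weighted shift $M_z$ on $\cH$) is incompatible with $I+T$ having closed range / being bounded below, hence with $1+cz$ being a cyclic multiplier with multiplier inverse. Concretely: if $1/(1+cz) \in \Mult(\cH)$ then $M_{1+cz}$ is invertible in $\Mult(\cH)$, so $I + cT$ is an invertible operator, which is certainly not hypercyclic. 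So I would choose the weights $(a_n)$ so that simultaneously (a) $\sum a_n < \infty$, (b) the complete Pick coefficient condition holds, and (c) the weighted shift $M_z$ on $\cH$ satisfies Salas's hypothesis making $I + M_z$ (or $I + \lambda M_z$ for some $\lambda$) hypercyclic. Checking that Salas's weight condition can be met together with (a) and (b) is the delicate calibration step — one must interleave the slow decay needed for (a), the log-convexity-type positivity needed for (b), and the "not too small ratios" needed for Salas's criterion — and I would expect to spend most of the proof there, quite possibly by starting from an explicit family such as $a_n = 1/(n+1)^s$ or a lacunary modification and tuning parameters. Once $\varphi = 1 + \lambda z$ is shown to be non-vanishing on $\ol{\bD}$ (choose $|\lambda| < 1$) with $1/\varphi \notin \Mult(\cH)$, Proposition~\ref{pra4} gives that $\ol{\bD}$ is a proper subset of the maximal ideal space of $\Mult(\cH)$, completing the proof.
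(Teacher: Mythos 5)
There is a genuine gap, and it lies exactly where you predicted the heart of the matter to be: the mechanism you propose for producing a non-vanishing multiplier with non-multiplier reciprocal does not work. You are invoking the wrong theorem of Salas. The hypercyclicity result (that $I+B_w$ is hypercyclic for a weighted \emph{backward} shift $B_w$ with positive weights) gives no contradiction with invertibility: hypercyclic operators can perfectly well be invertible -- indeed Salas's hypercyclicity theorem itself applies to backward shifts of norm less than $1$, where $I+B_w$ is invertible, and the inverse of an invertible hypercyclic operator is again hypercyclic. (Also note that the multiplication operator $I+\lambda M_z$ is never hypercyclic on a space like $\cH$, since its adjoint has the kernel functions as eigenvectors, so at best you would be arguing about the adjoint -- and there the same objection applies.) A quick sanity check shows the approach cannot be repaired: if it worked, it would equally ``prove'' failure of the one-function corona theorem for the spaces $\cH_s$ with $s<-1$ of Example \ref{exa:H_s}, where $\Mult(\cH_s)=\cH_s$ and the corona theorem \emph{holds}. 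Your concrete candidate is also doomed: for $|\lambda|<1$ the function $1/(1+\lambda z)$ is analytic on a neighbourhood of $\ol{\bD}$, and since the weights decrease to $1$ one has $\|M_z^n\|=\beta(n)$ with $\beta(n)^{1/n}\to 1$, so the Taylor series of $1/(1+\lambda z)$ in $M_z$ converges in operator norm; hence $1+\lambda z$ is always invertible in $\Mult(\cH)$, while for $|\lambda|=1$ it vanishes on $\partial\bD$. No degree-one multiplier can witness the failure.

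The paper's actual route is different and, notably, non-constructive: it uses Salas's 1981 example (answering Shields's Question 15) of a weighted shift whose weights $w_n$ decrease to $1$, with $\sum_n \beta(n)^{-2}<\infty$, which is \emph{not strictly cyclic}. Translating via Shields, non-strict cyclicity says precisely that $\Mult(\cH)\subsetneq\cH$ for the associated space with $a_n=\beta(n)^{-2}$; the monotonicity $a_n/a_{n+1}=w_n^2\downarrow 1$ gives the complete Pick property by Kaluza's lemma, and summability of $(a_n)$ gives continuity on $\ol{\bD}$, point separation and maximality of the domain, just as in your second paragraph (that part of your outline is fine). The failure of the one-function corona theorem then follows from the contrapositive of the implication (iii)~$\Rightarrow$~(i) in Proposition \ref{pra4}, whose proof rests on Theorem \ref{thm:smirnov}: if every non-vanishing multiplier were invertible, every $f\in\cH$ would be a multiplier. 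In particular, no explicit bad multiplier is exhibited; the strategy is to negate condition (i) of Proposition \ref{pra4}, not condition (iii) as you attempt.
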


In the terminology of Nikolski, the fact that the one-function corona theorem fails for $\Mult(\cH)$
means that the spectrum of $\Mult(\cH)$ is not \emph{1-visible}, see Definition 0.2.1 and Lemma 0.2.2
in \cite{Nikolski99}. We also remark that an example of a multiplier algebra
of functions on a subset of $\bC$ for which the one-function corona theorem fails was already
obtained by Trent \cite{Trent08}, but the multiplier algebra of Theorem \ref{thm:corona_intro} is quite different.
Indeed, Trent remarks that the functions in his multiplier algebra
``have no smoothness properties in general'' (see the discussion preceding \cite[Lemma 1]{Trent08}),
whereas the multipliers of Theorem \ref{thm:corona_intro} are analytic functions in the unit disc.

\section{Preliminaries}
\label{sec:prelim}

\subsection{Kernels, multipliers and normalization}
Let $\cH$ be a reproducing kernel Hilbert space on a set $X$ with reproducing kernel $k$.
For background material on reproducing kernel Hilbert spaces, the reader is referred to
\cite{AM02} and \cite{PR16}. We will assume throughout that $k$ does not vanish on the diagonal.
Let
\begin{equation*}
  \Mult(\cH) = \{ \varphi: X \to \bC: \varphi f \in \cH \text{ for all } f \in \cH \}
\end{equation*}
denote the \emph{multiplier algebra} of $\cH$. The closed graph theorem shows that
every $\varphi \in \Mult(\cH)$ determines a bounded multiplication operator $M_\varphi$ on $\cH$,
and we set $||\varphi||_{\Mult(\cH)} = ||M_\varphi||$.
We say that $k$ (or $\cH$) is \emph{normalized at $x_0 \in X$}
if $k(x,x_0) = 1$ for all $x \in X$, and we say that $k$ (or $\cH$) is \emph{normalized}
if it is normalized at some point in $X$.
If $k(z,w) \neq 0$ for all $z,w \in X$, then it is always
possible to normalize the kernel at a point (see \cite[Section 2.6]{AM02}). This procedure multiplies
all functions in $\cH$ by a fixed non-vanishing function and leaves $\Mult(\cH)$ unchanged.
If $k$ is normalized, then $\cH$ contains in particular the constant function $1$ and
$||1||_{\cH} = 1$, so that
$\Mult(\cH) \subset \cH$, and the inclusion is contractive.

\subsection{The Pick property}
We say that $\cH$ is a \emph{complete Pick space}
if for every $r \in \bN$ and every finite collection of points $z_1,\ldots,z_n \in X$
and matrices $W_1,\ldots,W_n \in M_r(\bC)$,
positivity of the $nr \times nr$-block matrix
\begin{equation*}
  \Big[ k(z_i,z_j) (I_{\bC^r} - W_i W_j^*) \Big]_{i,j=1}^n
\end{equation*}
implies that there exists $\Phi \in M_r(\Mult(\cH))$ of norm at most $1$ such that
\begin{equation*}
  \Phi(z_i) = W_i \quad (i=1,\ldots,n).
\end{equation*}
In this setting, we also say that $k$ is a \emph{complete Pick kernel}.
If $\cH$ is a normalized complete Pick space, then $k(z,w) \neq 0$ for all $z,w \in X$
by \cite[Lemma 7.2]{AM02}.
Complete Pick spaces were characterized
by a theorem of Quiggin \cite{Quiggin93} and McCullough \cite{McCullough92}. We require the following
characterization of Agler and McCarthy \cite{AM00}.

\begin{thm}[Agler-McCarthy]
  \label{thm:AM}
  Let $\cH$ be a reproducing kernel Hilbert space on $X$ with kernel $k$ which is normalized at $x_0 \in X$.
  Then $\cH$ is a complete Pick space if and only if there exists an auxiliary Hilbert space $\cE$
  and a function $b: X \to \cE$ with $||b(w)|| < 1$ for all $w \in X$, $b(x_0) = 0$ and
  \begin{equation*}
    k(z,w) = \frac{1}{1 - \langle b(z),b(w) \rangle_{\cE}}.
  \end{equation*}
  If $\cH$ is separable, then $\cE$ can be chosen to be separable.
\end{thm}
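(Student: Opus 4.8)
The plan is to reduce the theorem to the McCullough--Quiggin characterization of complete Pick kernels and then apply the standard factorization of positive kernels. The central claim is that, for a kernel $k$ normalized at $x_0$, $\cH$ is a complete Pick space if and only if the kernel $(z,w) \mapsto 1 - \frac{1}{k(z,w)}$ is positive semidefinite; this expression makes sense in the relevant direction because a normalized complete Pick kernel never vanishes (by \cite[Lemma 7.2]{AM02}, cited above). Granting this equivalence, the rest is bookkeeping: by the Moore--Kolmogorov factorization of a positive kernel, $1 - \frac{1}{k(z,w)} \geq 0$ holds if and only if there are a Hilbert space $\cE$ and a map $b \colon X \to \cE$ with $1 - \frac{1}{k(z,w)} = \langle b(z), b(w)\rangle_\cE$, which rearranges to the desired formula for $k$. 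The normalization $k(x,x_0) = 1$ forces $1 - \frac{1}{k(x_0,x_0)} = 0$, hence $\|b(x_0)\| = 0$; and since $k(w,w)$ is finite and, by positivity of $\left[\begin{smallmatrix} k(w,w) & 1 \\ 1 & 1\end{smallmatrix}\right]$, at least $1$, we get $\|b(w)\|^2 = 1 - \frac{1}{k(w,w)} < 1$ for every $w$. Thus the two directions of the theorem correspond exactly to the two directions of the equivalence.

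For the sufficiency direction (existence of $b$ implies complete Pick), I would run the lurking-isometry/transfer-function argument. Given positivity of $\big[ k(z_i,z_j)(I - W_i W_j^*)\big]$, I factor it as a Gram matrix $\langle u_i, u_j \rangle$, substitute $\frac{1}{k} = 1 - \langle b,b\rangle$, and rearrange into an equality of Gram matrices of the form
\[
 I + \langle b(z_i),b(z_j)\rangle \langle u_i, u_j\rangle \;=\; W_i W_j^* + \langle u_i, u_j\rangle ,
\]
from which one reads off an isometry whose associated transfer function is a contractive matrix multiplier $\Phi$ with $\Phi(z_i) = W_i$. Alternatively, and more conceptually, expanding the geometric series shows $k(z,w) = K(b(z), b(w))$, where $K(\zeta,\eta) = \frac{1}{1 - \langle \zeta, \eta\rangle}$ is the Drury--Arveson kernel on the unit ball of $\cE$; since $K$ is a complete Pick kernel and $b$ maps into its domain, any positive Pick problem over $X$ transports to one over the ball, and composing a solution with $b$ gives an interpolant of the same norm.

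The main obstacle is the necessity direction: complete Pick implies $1 - \frac{1}{k} \geq 0$. Here the \emph{complete}, matrix-valued nature of the Pick property is essential---scalar interpolation alone does not suffice. On a finite set $\{z_1,\dots,z_n\}$ one must deduce, from the mere solvability of \emph{every} positive matrix Pick problem, the positivity of the single fixed matrix $\big[1 - \frac{1}{k(z_i,z_j)}\big]$. The idea is to feed the complete Pick property a tautological interpolation problem built from the kernel matrix $\big[k(z_i,z_j)\big]$ itself, so that the existence of a contractive interpolant forces $\big[1 - \frac{1}{k(z_i,z_j)}\big]$ to admit a Gram factorization, i.e.\ to be positive. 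This is the technical heart of the McCullough--Quiggin theorem and the step I expect to require the most care.

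Finally, for the separability statement, I take $\cE = \overline{\operatorname{span}}\{b(w) : w \in X\}$, which realizes the reproducing kernel Hilbert space of the positive kernel $1 - \frac{1}{k}$. If $\cH$ is separable, the kernel functions $\{k(\cdot,w) : w \in X\}$ admit a countable dense subset $\{k(\cdot,w_j)\}$; since the factorization depends continuously on the kernel values away from the (empty) zero set of $k$, the vectors $\{b(w_j)\}$ are then total in $\cE$, so $\cE$ is separable.
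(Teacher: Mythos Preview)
The paper does not prove this theorem. It is quoted as a known characterization due to Agler and McCarthy \cite{AM00}, building on the work of McCullough \cite{McCullough92} and Quiggin \cite{Quiggin93}; the statement appears in Section~\ref{sec:prelim} purely as background and is used as a black box in the proof of Theorem~\ref{thm:smirnov}. There is therefore no proof in the paper to compare your proposal against.

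That said, your outline tracks the standard argument faithfully. The sufficiency direction via the lurking-isometry/transfer-function machinery, or equivalently via pullback from the Drury--Arveson kernel, is the conventional route. Your identification of the necessity direction as the McCullough--Quiggin step is correct, and your description of it---feeding the complete Pick hypothesis a tautological matrix problem built from the kernel matrix itself---is the right idea, though you have not actually written down that problem or shown why a contractive interpolant forces positivity of $\big[1 - 1/k(z_i,z_j)\big]$; this is where the genuine work lies and your sketch remains a sketch. The separability argument is fine in spirit, though the phrase ``depends continuously on the kernel values'' is doing some unexamined work; a cleaner way is to note that the map $w \mapsto b(w)$ is weakly continuous into $\cE$ when $\cH$ is separable (since $\langle b(z),b(w)\rangle = 1 - 1/k(z,w)$ and point evaluations are norm-continuous in $w$), and then pick a countable $w$-set whose kernel functions are dense.
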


A comprehensive treatment of complete Pick spaces, as well as examples besides the ones already mentioned
in the introduction, can be found in \cite{AM02}.

\subsection{Cyclic multipliers}
\label{ss:cyclic}

We say that a multiplier $\varphi$ of a reproducing kernel Hilbert space $\cH$
is \emph{cyclic} if
the multiplication operator $M_\varphi$ on $\cH$ has dense range.
Observe that $\varphi$ is cyclic if and only if
$\ker(M_\varphi^*) = \{0\}$.
In particular, the product of two cyclic multipliers is cyclic.

We require the following version of the maximum modulus principle
for Hilbert function spaces with non-vanishing kernels.
\begin{lem}
  \label{lem:max_mod}
  Let $\cH$ be a reproducing kernel Hilbert space on $X$ with kernel $k$ such that $k(z,w) \neq 0$ for all
  $z,w \in X$ and let $\varphi \in \Mult(\cH)$ with $||\varphi||_{\Mult(\cH)} \le 1$.
  If there exists $z \in X$ with $|\varphi(z)| = 1$, then $\varphi$ is constant.
\end{lem}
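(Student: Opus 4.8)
The plan is to exploit the interaction between $M_\varphi$ and the reproducing kernel. Write $k_w := k(\cdot,w) \in \cH$ for the kernel function at $w \in X$, and recall the standard identity $M_\varphi^* k_w = \ol{\varphi(w)} k_w$, which follows from $\langle f, M_\varphi^* k_w \rangle = \langle \varphi f, k_w \rangle = \varphi(w) f(w) = \langle f, \ol{\varphi(w)} k_w \rangle$ for all $f \in \cH$. Thus $k_z$ is an eigenvector of $M_\varphi^*$ with eigenvalue $\ol{\varphi(z)}$, which has modulus one by hypothesis.

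First I would observe that $\|M_\varphi\| \le 1$ forces $I - M_\varphi M_\varphi^*$ to be a positive operator whose associated quadratic form vanishes at $k_z$:
\[
  \langle (I - M_\varphi M_\varphi^*) k_z, k_z \rangle
  = \|k_z\|^2 - \|M_\varphi^* k_z\|^2
  = \bigl(1 - |\varphi(z)|^2\bigr)\|k_z\|^2 = 0 .
\]
Since a positive operator $A$ with $\langle A u, u \rangle = 0$ satisfies $A u = 0$ (because $\|A^{1/2} u\|^2 = 0$), this gives $M_\varphi M_\varphi^* k_z = k_z$. Combining this with $M_\varphi^* k_z = \ol{\varphi(z)} k_z$ and $|\varphi(z)| = 1$ yields $M_\varphi k_z = \varphi(z) k_z$.

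Finally I would read off what this identity says pointwise: for every $w \in X$,
\[
  \varphi(w)\, k(w,z) = (M_\varphi k_z)(w) = \varphi(z)\, k(w,z),
\]
and since $k(w,z) \neq 0$ by hypothesis, $\varphi(w) = \varphi(z)$. Hence $\varphi$ is constant. I do not expect any genuine obstacle here; the only point requiring a word of care is the elementary fact that a positive operator annihilates every vector on which its quadratic form vanishes, and the non-vanishing hypothesis on $k$, which is precisely what is needed to upgrade $M_\varphi k_z = \varphi(z) k_z$ to constancy of $\varphi$ on all of $X$ — without it one would only conclude that $\varphi$ is constant on the set where $k(\cdot,z)$ does not vanish.
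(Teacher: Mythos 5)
Your proof is correct, and it takes a somewhat different route from the paper's. The paper normalizes $\varphi(z)=1$ and invokes positive semidefiniteness of the two-point matrix $\bigl[k(z_i,z_j)\bigl(1-\varphi(z_i)\ol{\varphi(z_j)}\bigr)\bigr]$ at $\{z,w\}$; since the $(1,1)$ entry vanishes, the determinant gives $|k(z,w)|^2\,|1-\varphi(w)|^2\le 0$, forcing $\varphi(w)=1$. You instead phrase the same positivity operator-theoretically: contractivity of $M_\varphi$ makes the defect operator $I-M_\varphi M_\varphi^*$ positive, the eigenvalue relation $M_\varphi^* k_z=\ol{\varphi(z)}k_z$ together with $|\varphi(z)|=1$ annihilates its quadratic form at $k_z$, hence $M_\varphi M_\varphi^* k_z=k_z$, so $M_\varphi k_z=\varphi(z)k_z$, and evaluating at $w$ and dividing by $k(w,z)\neq 0$ yields constancy. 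The two arguments rest on the same underlying fact, namely positivity of the kernel $k(z,w)\bigl(1-\varphi(z)\ol{\varphi(w)}\bigr)$, which is exactly positivity of the defect operator; the paper restricts it to the span of $k_z$ and $k_w$ and computes a determinant, while you use the operator globally. Your version avoids the normalization and the determinant and gives the slightly stronger intermediate statement that $k_z$ is an eigenvector of $M_\varphi$ itself with unimodular eigenvalue; the paper's version stays within finite Pick-matrix computations, in keeping with the framework of the rest of the paper. Neither argument needs the complete Pick property, only $\|M_\varphi\|\le 1$ and the non-vanishing of $k$, and your closing remark correctly identifies where that hypothesis enters.
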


\begin{proof}
  By multiplying $\varphi$ by a complex number number of modulus $1$, we may assume that $\varphi(z) = 1$. Let
  $w \in X$ be arbitrary. Since $||\varphi||_{\Mult(\cH)} \le 1$, the Pick matrix at $\{z,w\}$, which is
  \begin{align*}
    \begin{pmatrix}
      0 & k(z,w) (1 - \ol{\varphi(w)}) \\
      k(w,z) ( 1 - \varphi(w)) & k(y,y) ( 1 - |\varphi(w)|^2)
    \end{pmatrix},
  \end{align*}
  is positive semidefinite. Taking its determinant, we see that
  \begin{equation*}
    0 \le - |k(z,w)|^2  |1 - \varphi(w)|^2.
  \end{equation*}
  Since $k(z,w) \neq 0$ by assumption, this inequality implies that $\varphi(w) = 1$.
  Consequently, $\varphi$ is identically $1$.
\end{proof}

The following lemma gives a sufficient condition for cyclicity of multipliers, which is probably known.

\begin{lem}
  \label{lem:denominator_cyclic}
  Let $\cH$ be a reproducing kernel Hilbert space on $X$ with kernel $k$ such that $k(z,w) \neq 0$
  for all $z,w \in X$.
  Let $\varphi \in \Mult(\cH)$ with $||\varphi||_{\Mult(\cH)} \le 1$.
  If $\varphi \neq 1$, then $1 - \varphi$ is cyclic.
\end{lem}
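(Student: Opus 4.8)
The plan is to deduce this from the maximum modulus principle just proven (Lemma~\ref{lem:max_mod}). Since the constant function $1$ is always a multiplier with $M_1 = I$, we have $1 - \varphi \in \Mult(\cH)$ and $M_{1-\varphi} = I - M_\varphi$, so that $M_{1-\varphi}^* = I - M_\varphi^*$. By the remarks in Subsection~\ref{ss:cyclic}, the multiplier $1 - \varphi$ is cyclic precisely when $\ker(M_{1-\varphi}^*) = \{0\}$, hence it suffices to prove that $1$ is not an eigenvalue of $M_\varphi^*$.

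Suppose toward a contradiction that $g \in \cH$ is nonzero with $M_\varphi^* g = g$. As $\|M_\varphi\| = \|\varphi\|_{\Mult(\cH)} \le 1$, the operator $M_\varphi$ is a contraction, and I would first pass from $M_\varphi^* g = g$ to $M_\varphi g = g$ by the standard fact that a contraction and its adjoint have the same fixed vectors: using $\langle g, M_\varphi g \rangle = \langle M_\varphi^* g, g \rangle = \|g\|^2$,
\begin{equation*}
  \| g - M_\varphi g \|^2 = \|g\|^2 - \langle g, M_\varphi g \rangle - \langle M_\varphi g, g \rangle + \|M_\varphi g\|^2 = \|M_\varphi g\|^2 - \|g\|^2 \le 0 .
\end{equation*}
Thus $M_\varphi g = g$, which reads $\varphi(x) g(x) = g(x)$ for all $x \in X$. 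Since $g$ is a nonzero element of a reproducing kernel Hilbert space, there is a point $w \in X$ with $g(w) \neq 0$, and then $\varphi(w) = 1$; in particular $|\varphi(w)| = 1$. Because $k(z,w) \neq 0$ for all $z,w \in X$, Lemma~\ref{lem:max_mod} now forces $\varphi$ to be constant, so $\varphi \equiv \varphi(w) = 1$, contradicting the hypothesis $\varphi \neq 1$. Hence $\ker(M_{1-\varphi}^*) = \{0\}$ and $1 - \varphi$ is cyclic.

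I do not expect a genuine obstacle here: once Lemma~\ref{lem:max_mod} is in place the argument is short, and the non-vanishing hypothesis on $k$ is used only at that final invocation. The one point to handle with a little care is the upgrade from $M_\varphi^* g = g$ to $M_\varphi g = g$, which is the elementary contraction computation displayed above and requires no normalization of the kernel; alternatively one could argue directly that any $w$ with $g(w)\neq 0$ satisfies $\varphi(w)=1$ by testing against the reproducing kernel, but the contraction argument seems cleanest.
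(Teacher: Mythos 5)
Your proposal is correct and follows the same route as the paper: reduce cyclicity to injectivity of $1 - M_\varphi^*$, use the standard contraction argument to pass from $M_\varphi^* g = g$ to $M_\varphi g = g$, and then apply Lemma~\ref{lem:max_mod} to conclude $\varphi \equiv 1$ unless $g = 0$. The only difference is cosmetic (you argue by contradiction and write out the fixed-point computation that the paper merely cites).
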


\begin{proof}
  It suffices to show that $M_{1 - \varphi}^* = 1 - M_\varphi^*$ is injective.
  Thus, let $h \in \ker( 1 - M_{\varphi}^*)$. Then $M_\varphi^* h = h$. Since $M_\varphi$ is a contraction,
  it follows that $M_\varphi h = h$ and hence that
  $(\varphi - 1) h = 0$.
  Since $\varphi \neq 1$, Lemma \ref{lem:max_mod} implies that $\varphi - 1$ has no zeros on $X$. Consequently,
  $h = 0$, so that $1 - \varphi$ is cyclic.
\end{proof}

\section{Proof of Theorem \ref{thm:smirnov_intro} and first consequences}
\label{sec:proof_main}

We now present the proof of Theorem \ref{thm:smirnov_intro}. For the convenience of the reader,
we restate the result.
\begin{thm}
  \label{thm:smirnov}
  Let $\cH$ be a complete Pick space on $X$ whose kernel is normalized at $x_0 \in X$ and let $f \in \cH$ with $||f||_{\cH} \le 1$.
  Then
  there are $\varphi,\psi \in \Mult(\cH)$ of multiplier norm at most $1$ with $\psi(x_0) = 0$
  such that
  \begin{equation*}
    f = \frac{\varphi}{1 - \psi}.
  \end{equation*}
  In particular, $\cH \subset N^+(\cH)$.
\end{thm}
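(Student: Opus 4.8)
The plan is to reduce the statement to the existence of a single contractive multiplier, and then to build that multiplier from the Agler--McCarthy representation of $k$ by a transfer-function (lurking isometry) argument, extending the scheme of Alpay--Bolotnikov--Kaptano\u{g}lu \cite[Theorem~10.3]{ABK02}. For the reduction, note that since $\varphi = (1-\psi)f$, the identity $f = \varphi/(1-\psi)$ is the same as the pointwise identity $\varphi(z) + \psi(z) f(z) = f(z)$ on $X$; equivalently, the row $(\varphi\ \ \psi)$, viewed as a multiplier from $\cH \oplus \cH$ into $\cH$, carries the function whose first coordinate is the constant $1 \in \cH$ and whose second coordinate is $f$ to $f$ itself. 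If this row multiplier can be chosen to be a contraction, then $M_\varphi$ and $M_\psi$ are its restrictions to $\cH \oplus 0$ and $0 \oplus \cH$, hence contractions, so $\varphi, \psi \in \Mult(\cH)$ with multiplier norm at most $1$. Thus it suffices to find $\varphi, \psi$ with $\psi(x_0) = 0$ such that $(\varphi\ \ \psi)$ is a contractive multiplier and $\varphi + \psi f = f$.

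To produce such a pair I would use two facts. First, $\|f\|_{\cH} \le 1$ is equivalent to positive semidefiniteness of the kernel $k(z,w) - f(z)\overline{f(w)}$ on $X$; this is just the Gram-matrix reformulation of the statement that the functional $h \mapsto \langle h, f\rangle$ is a contraction on $\cH$. Second, by Theorem~\ref{thm:AM} we may write $k(z,w) = \big(1 - \langle b(z), b(w)\rangle_{\cE}\big)^{-1}$ with $b(x_0) = 0$ and $\|b(w)\|_{\cE} < 1$ for all $w$, which is equivalent to the identity
\begin{equation*}
  k(z,w) = 1 + \langle b(z), b(w)\rangle_{\cE}\, k(z,w) .
\end{equation*}
Subtracting $f(z)\overline{f(w)}$ and combining the two facts gives
\begin{equation*}
  1 + \langle b(z), b(w)\rangle_{\cE}\, k(z,w) - f(z)\overline{f(w)} \succeq 0 .
\end{equation*}
Running the usual lurking-isometry argument on this positivity: writing both sides as Gram matrices over an auxiliary Hilbert space produces a contractive colligation $V$ (a coisometry, or a unitary after dilating) between the appropriate spaces; unwinding it --- using the Neumann series for the resolvent of the strict contraction built out of $\langle b(z), \cdot\,\rangle$, which converges because $\|b(w)\|_{\cE} < 1$ --- yields a transfer-function formula for $f$, which upon clearing the resolvent takes the form $f = \varphi/(1-\psi)$, with $\varphi$ and $\psi$ assembled from $b$ and the blocks of $V$. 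Contractivity of $V$ makes $(\varphi\ \ \psi)$ a contractive multiplier, and $b(x_0) = 0$ forces $\psi(x_0) = 0$. This is where the complete Pick property is indispensable: bare positivity of $k(z,w) - f(z)\overline{f(w)}$ does not by itself yield a multiplier, whereas the rational form $k = 1/(1 - \langle b, b\rangle)$ of the kernel is exactly what turns the lurking isometry into a transfer function with multiplier-valued numerator and denominator. In the Drury--Arveson case this is \cite[Theorem~10.3]{ABK02}; Theorem~\ref{thm:AM} supplies the required kernel structure in general.

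For the last assertion: if $f = 0$ take $\varphi = \psi = 0$, and otherwise $\psi(x_0) = 0 \neq 1$ forces $\psi \neq 1$, so $1 - \psi$ is cyclic (in particular nowhere vanishing) by Lemma~\ref{lem:denominator_cyclic}; hence $f = \varphi/(1-\psi) \in N^+(\cH)$, and since $f$ ranged over the closed unit ball of $\cH$, we get $\cH \subset N^+(\cH)$. I expect the main obstacle to be the middle step: arranging the colligation so that its unwinding has precisely the quotient shape, with numerator and denominator in $\Mult(\cH)$ and with the denominator normalized to equal $1$ at $x_0$. Once the colligation is in place, verifying convergence of the resolvent series and carrying out the contractivity and normalization bookkeeping should be routine.
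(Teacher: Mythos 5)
Your plan follows the Alpay--Bolotnikov--Kaptano\u{g}lu realization route rather than the paper's shortcut (the paper applies Leech's theorem, \cite[Theorem 8.57]{AM02}, to the row $\Phi = (1, fb)$), and that route can be made to work; but as written the decisive step is missing, exactly where you flag it. The positivity you propose to feed into the lurking isometry, $1 + \langle b(z),b(w)\rangle_{\cE}\, k(z,w) - f(z)\ol{f(w)} \succeq 0$, is merely a restatement of $k - f\ol{f} \succeq 0$, and in this arrangement the term carrying $b$ multiplies the kernel $k$ itself rather than an auxiliary state function. After Gram-factoring and writing down the isometry, there is therefore no state equation of the form $g(w) = C + D\bigl(b(w)\otimes g(w)\bigr) + \cdots$ to solve: the resolvent $(I - D(b(w)\otimes\cdot))^{-1}$ you invoke never appears, and no amount of ``clearing'' it produces the quotient shape. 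The missing idea is the specific regrouping
\begin{equation*}
  \bigl(k(z,w)-f(z)\ol{f(w)}\bigr) + f(z)\ol{f(w)}
  \;=\; 1 + \langle b(z),b(w)\rangle_{\cE}\bigl(k(z,w)-f(z)\ol{f(w)}\bigr)
  + \langle b(z),b(w)\rangle_{\cE}\, f(z)\ol{f(w)},
\end{equation*}
in which the Gram vectors of the positive kernel $k - f\ol{f}$ serve as the state on both sides and the extra input direction $f(w)b(w)$ appears on the right; only with this colligation does the unwinding close up and yield $\ol{f(w)}\bigl(1-\ol{\psi(w)}\bigr) = \ol{\varphi(w)}$, with $\psi(x_0)=0$ because every term of $\psi$ carries a factor of $b$. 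This regrouping is exactly what the paper encodes in the row $\Phi(z) = (1, f(z)b(z))$ via the identity $k(z,w)\bigl(\Phi(z)\Phi(w)^* - f(z)\ol{f(w)}\bigr) = k(z,w) - f(z)\ol{f(w)}$, after which Leech's theorem (whose irreducibility hypothesis in \cite[Theorem 8.57]{AM02} can be relaxed to normalization, as the paper notes) produces a contractive column $\Psi = \begin{pmatrix} \varphi \\ \widetilde{\Psi} \end{pmatrix}$ with $\Phi\Psi = f$, and $\psi := b\widetilde{\Psi}$ does the job. Note also that your assertion that contractivity of $V$ ``makes $(\varphi\ \ \psi)$ a contractive multiplier'' is itself the converse half of the realization theorem and requires the standard positivity computation with the contractive colligation; with the wrong colligation there is nothing to compute, so this cannot be dismissed as bookkeeping.

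The outer parts of your argument are fine: entries of a contractive row multiplier are contractive multipliers, so your reduction is legitimate (though the theorem only needs each of $\varphi,\psi$ to be contractive separately, which is all the paper's construction delivers); and your endgame is correct --- $\psi(x_0)=0$ forces $\psi \neq 1$, so $1-\psi$ is cyclic by Lemma \ref{lem:denominator_cyclic}, using that a normalized complete Pick kernel never vanishes, and rescaling an arbitrary $f$ to the unit ball gives $\cH \subset N^+(\cH)$. The case split at $f = 0$ is unnecessary, since $\psi(x_0)=0$ already rules out $\psi = 1$ regardless of $f$.
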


\begin{proof}
  By Theorem \ref{thm:AM}, there exists
  a Hilbert space $\cE$ and a function $b: X \to \cB(\cE,\bC)$ with $b(x_0) = 0$ such that
  \begin{equation*}
    k(z,w) = \frac{1}{1- b(z) b(w)^* }.
  \end{equation*}
  Note that $b \in \Mult(\cH \otimes \cE, \cH)$ has multiplier norm at most $1$,
  as $k(z,w) (1 - b(z) b(w)^*) = 1$ is positive definite.
  Define $\Phi: X \to \cB( \bC \oplus \cE,\bC)$ by
  \begin{equation*}
    \Phi(z) = (1, f(z) b(z)).
  \end{equation*}
  Then
  \begin{align*}
    k(z,w) \Phi(z) \Phi(w)^* &= k(z,w) + f(z) \ol{f(w)} k(z,w) b(z) b(w)^* \\
    &= k(z,w) + f(z) \ol{f(w)} k(z,w) - f(z) \ol{f(w)}
  \end{align*}
  and consequently
  \begin{equation*}
    k(z,w) ( \Phi(z) \Phi(w)^* - f(z) \ol{f(w)}) = k(z,w) - f(z) \ol{f(w)}.
  \end{equation*}
  Since $||f||_{\cH} \le 1$ this function is positive definite (see \cite[Theorem 3.11]{PR16}). In this setting,
  a version of Leech's theorem \cite{BTV01,AT02}, or more precisely the implication
  (i) $\Rightarrow$ (ii) of \cite[Theorem 8.57]{AM02}, shows that there exists
  a vector valued multiplier $\Psi \in \Mult(\cH, \cH \otimes (\bC \oplus \cE))$ of norm
  at most $1$ such that
  \begin{equation*}
    \Phi(z) \Psi(z) = f(z) \quad (z \in X).
  \end{equation*}
  In the statement of \cite[Theorem 8.57]{AM02}, it is assumed that $k$ satisfies an irreducibility
  condition that implies that $\cH$ separates the points of $X$, but the proof shows that it suffices to assume that $k$ is normalized.
  Write
  \begin{equation*}
    \Psi(z) =
    \begin{pmatrix}
      \varphi(z) \\ \widetilde{\Psi}(z)
    \end{pmatrix},
  \end{equation*}
  where $\varphi \in \Mult(\cH)$ and $\widetilde{\Psi}(z) \in \Mult(\cH, \cH \otimes \cE)$
  both have norm at most one.
  Then
  \begin{equation*}
    \varphi(z) + f(z) b(z) \widetilde{\Psi}(z) = f(z),
  \end{equation*}
  so that
  \begin{equation*}
    f(z) ( 1 - b(z) \widetilde{\Psi}(z)) = \varphi(z).
  \end{equation*}
  Defining $\psi(z) = b(z) \widetilde{\Psi}(z)$, we obtain the desired representation of $f$.
  Lemma \ref{lem:denominator_cyclic} implies that $1 - \psi$ is a cyclic multiplier, which shows
  that $f \in N^+(\cH)$ and hence proves the additional assertion.
\end{proof}

Corollary \ref{cor:zero_sets_intro} is now an immediate consequence of the preceding theorem.

\begin{cor}
  \label{cor:zero_sets}
  Let $\cH$ be a normalized complete Pick space.
  Then the zero sets for $\cH$, $\Mult(\cH)$
  and $N^+(\cH)$ agree. In particular, the union of two zero sets for $\cH$ is a zero set for $\cH$.
\end{cor}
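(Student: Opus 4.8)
The plan is to establish a circular chain of inclusions among the three families of zero sets, most of which are either trivial or follow directly from Theorem \ref{thm:smirnov}. First observe that $\Mult(\cH) \subset \cH \subset N^+(\cH)$: the first inclusion holds because $\cH$ is normalized (so the constant function $1$ lies in $\cH$), and the second is the content of Theorem \ref{thm:smirnov}. Consequently every zero set for $\Mult(\cH)$ is a zero set for $\cH$, and every zero set for $\cH$ is a zero set for $N^+(\cH)$. It therefore suffices to show that every zero set for $N^+(\cH)$ is already a zero set for $\Mult(\cH)$, which closes the loop.

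To do this, let $Z \subset X$ be the zero set of some $h \in N^+(\cH)$. Writing $h = \varphi / \psi$ with $\varphi, \psi \in \Mult(\cH)$ and $\psi$ cyclic, we note that $\psi$ vanishes nowhere on $X$ (a cyclic multiplier has trivial cokernel, and by Lemma \ref{lem:max_mod} together with the standard argument in Lemma \ref{lem:denominator_cyclic} such a multiplier cannot vanish at any point, since $k$ does not vanish on the diagonal and $k(z,w)\neq 0$ for all $z,w$ by \cite[Lemma 7.2]{AM02}). Hence the zero set of $h$ coincides exactly with the zero set of the numerator $\varphi \in \Mult(\cH)$. Thus $Z$ is a zero set for $\Mult(\cH)$, completing the chain
\begin{equation*}
  \{\text{zero sets for } \Mult(\cH)\} \subset \{\text{zero sets for } \cH\} \subset \{\text{zero sets for } N^+(\cH)\} \subset \{\text{zero sets for } \Mult(\cH)\}.
\end{equation*}

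For the final assertion, suppose $Z_1$ and $Z_2$ are zero sets for $\cH$. By the equality just established, they are zero sets for $\Mult(\cH)$, say $Z_j$ is the zero set of $\varphi_j \in \Mult(\cH)$. Then $\varphi_1 \varphi_2 \in \Mult(\cH)$ has zero set exactly $Z_1 \cup Z_2$, so $Z_1 \cup Z_2$ is a zero set for $\Mult(\cH)$ and hence, again by the equality of zero-set families, a zero set for $\cH$.

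I do not anticipate a serious obstacle here: the corollary is designed to be an immediate consequence of Theorem \ref{thm:smirnov}, and the only point requiring a moment's care is the observation that cyclic multipliers are non-vanishing, which has already been extracted in the proof of Lemma \ref{lem:denominator_cyclic}. The one place to be slightly careful is that the equality of zero-set \emph{families} (rather than just individual zero sets) is exactly what makes the product argument for unions go through, so it is worth phrasing the chain of inclusions at the level of families rather than fixed sets.
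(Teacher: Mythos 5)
Your proof is correct and follows essentially the same route as the paper: the chain $\Mult(\cH) \subset \cH \subset N^+(\cH)$ from Theorem \ref{thm:smirnov}, the observation that the zero set of $\varphi/\psi$ with $\psi$ cyclic (hence non-vanishing) equals the zero set of the numerator $\varphi$, and the product of multipliers for unions. The only nitpick is your justification that cyclic multipliers never vanish: the clean argument is that $\psi(z)=0$ forces $k(\cdot,z)\in\ker M_\psi^{*}=\{0\}$, contradicting $k(z,z)\neq 0$, rather than an appeal to Lemma \ref{lem:max_mod} or Lemma \ref{lem:denominator_cyclic}, which prove different statements.
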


\begin{proof}
  Observe that
  \begin{equation*}
    \Mult(\cH) \subset \cH \subset N^+(\cH),
  \end{equation*}
  where the first inclusion holds since $\cH$ is normalized, so that $1 \in \cH$,
  and the second inclusion follows from Theorem \ref{thm:smirnov}.
  It is immediate from the definition of $N^+(\cH)$
  that every zero set for $N^+(\cH)$ is a zero set for $\Mult(\cH)$,
  so that the zero sets for all three spaces agree. Since $\Mult(\cH)$
  is an algebra, the union of two zero sets is a zero set.
\end{proof}

\begin{rem}
  To obtain equality of the zero sets for $\cH$ and for $\Mult(\cH)$, it suffices
  to assume that $\cH$ is a complete Pick space whose kernel does not vanish anywhere.
  Indeed, in this case, the kernel can be normalized at a point (see \cite[Section 2.6]{AM02}),
  which does not affect the zero sets.
\end{rem}

It is also not hard to deduce Corollary \ref{pra3} from Theorem \ref{thm:smirnov}.

\begin{cor}
  \label{cor:part_mult}
  Let $\cH$ be a normalized complete Pick space and let $\rho$ be a bounded non-zero functional on $\cH$.
  Then the following are equivalent.
  \begin{enumerate}[label=\normalfont{(\roman*)}]
    \item $\rho(\varphi g) = \rho(\varphi) \rho(g)$ for all $\varphi \in \Mult(\cH)$ and all $g \in \cH$.
    \item $\rho(f g) = \rho(f) \rho(g)$ whenever $f,g \in \cH$ such that $f g \in \cH$.
  \end{enumerate}
\end{cor}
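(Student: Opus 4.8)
The implication (ii) $\Rightarrow$ (i) is trivial: if $\varphi \in \Mult(\cH)$ and $g \in \cH$, then $\varphi g \in \cH$, so (ii) applies directly. The substance is in (i) $\Rightarrow$ (ii), and here I would use Theorem \ref{thm:smirnov} to reduce the general case of two functions in $\cH$ to the case where one factor is a multiplier. The plan is as follows. Suppose $f, g \in \cH$ with $fg \in \cH$, and assume (i). By scaling we may take $\|f\|_\cH \le 1$, and by Theorem \ref{thm:smirnov} write $f = \varphi/(1-\psi)$ with $\varphi, \psi \in \Mult(\cH)$, $\|\psi\|_{\Mult(\cH)} \le 1$, $\psi(x_0) = 0$; in particular $1 - \psi$ is a cyclic multiplier by Lemma \ref{lem:denominator_cyclic}, and it is non-vanishing on $X$ by Lemma \ref{lem:max_mod}. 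Then $(1-\psi) f = \varphi \in \Mult(\cH) \subset \cH$ and, since $fg \in \cH$, also $(1-\psi)(fg) \in \cH$. Applying (i) with the multiplier $1 - \psi$ against the functions $f$, $g$, and $fg$ gives
\begin{equation*}
  \rho(1-\psi)\,\rho(f) = \rho(\varphi), \qquad \rho(1-\psi)\,\rho(fg) = \rho\big((1-\psi) fg\big) = \rho(1-\psi)\,\rho\big(\varphi \cdot (g/(1-\psi) \cdot (1-\psi)) \big),
\end{equation*}
so I should be more careful: write instead $\rho\big((1-\psi)fg\big) = \rho(\varphi g) = \rho(\varphi)\rho(g)$ using (i) with the multiplier $\varphi$ and the function $g$.

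Now the key point is that $\rho(1-\psi) \ne 0$. To see this, note that $(1-\psi)$ is cyclic, so $M_{1-\psi}$ has dense range; equivalently $(1-\psi)\cH$ is dense in $\cH$. If $\rho(1-\psi) = 0$ then, by (i), $\rho\big((1-\psi)h\big) = \rho(1-\psi)\rho(h) = 0$ for every $h \in \cH$, so $\rho$ vanishes on the dense subspace $(1-\psi)\cH$ of $\cH$; since $\rho$ is bounded, $\rho \equiv 0$, contradicting the hypothesis that $\rho$ is non-zero. Hence $\rho(1-\psi) \ne 0$, and we may divide. From $\rho(1-\psi)\rho(f) = \rho(\varphi)$ we get $\rho(f) = \rho(\varphi)/\rho(1-\psi)$, and from $\rho(1-\psi)\rho(fg) = \rho(\varphi)\rho(g)$ we get $\rho(fg) = \rho(\varphi)\rho(g)/\rho(1-\psi) = \rho(f)\rho(g)$, which is exactly (ii).

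I expect the main obstacle to be purely organizational: making sure every product used actually lies in $\cH$ so that (i) legitimately applies, and carefully invoking cyclicity to produce the non-vanishing of $\rho$ on the denominator. Once Theorem \ref{thm:smirnov} is in hand, no further hard analysis is needed — the argument is a clean bootstrap from "$\rho$ is multiplicative against multipliers" to "$\rho$ is multiplicative against all admissible products," using that every element of $\cH$ becomes a multiplier after multiplication by a suitable cyclic multiplier. One small subtlety worth double-checking: the representation $f = \varphi/(1-\psi)$ only handles $f$ with $\|f\|_\cH \le 1$, so in the general case I first rescale $f$ (and compensate by rescaling, or simply note that (ii) is unaffected by multiplying $f$ by a scalar), and symmetrically I do not need to factor $g$ at all — factoring $f$ alone suffices.
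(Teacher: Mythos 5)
Your proposal is correct and follows essentially the same route as the paper: apply Theorem \ref{thm:smirnov} to write $f$ as a multiplier divided by a cyclic multiplier, use cyclicity together with boundedness of $\rho$ to see that $\rho$ does not vanish on the denominator, and then bootstrap the multiplicativity from (i) to the product $fg$. The only cosmetic differences are your explicit rescaling remark and working with the specific denominator $1-\psi$ rather than a generic cyclic $\psi$, neither of which changes the argument.
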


\begin{proof}
  The implication (ii) $\Rightarrow$ (i) is trivial since $\Mult(\cH) \subset \cH$.
  Conversely, assume that (i) holds  and let $f,g \in \cH$
  such that $f g \in \cH$. By Theorem \ref{thm:smirnov}, we may write $f = \varphi /\psi$, where $\varphi , \psi \in \Mult(\cH)$
  and $\psi$ is cyclic.
  We claim that $\rho(\psi) \neq 0$.
  Indeed, if $\rho(\psi) = 0$, then
  $\rho(\psi g) = 0$ for all $g \in \cH$ and hence $\rho = 0$, as $\psi$ is cyclic.
  Therefore, $\rho(\psi) \neq 0$.
  Moreover,
  \begin{equation*}
    \rho(\psi) \rho(f g) = \rho(\psi f g) = \rho( \varphi g) = \rho(\varphi) \rho(g) = \rho(\psi f) \rho(g)
    = \rho(\psi) \rho(f) \rho(g).
  \end{equation*}
  Since $\rho(\psi) \neq 0$, it follows that $\rho(f g) = \rho(f) \rho(g)$.
\end{proof}

\section{Gleason parts}
\label{sec:gleason}

Gleason parts were originally introduced in the context of uniform algebras,
but it is possible to generalize this notion to multiplier algebras of
reproducing kernel Hilbert spaces, and indeed to arbitrary unital operator algebras of functions.
This was observed by Rochberg \cite[Proposition 8]{Rochberg14} for multiplier algebras of
certain weighted Dirichlet spaces on $\bD$,
but his arguments readily generalize. For completeness, we provide the proof of the lemma below.
It is an easy adaptation of \cite[Theorem VI.2.1]{Gamelin69}.

\begin{lem}
  \label{lem:Gleason_parts}
  Let $\cH$ be a reproducing kernel Hilbert space and let $\rho_1,\rho_2$ be characters
  on $\Mult(\cH)$. Then the following are equivalent.
  \begin{enumerate}[label=\normalfont{(\roman*)}]
    \item $||\rho_1 - \rho_2 || < 2$.
    \item $|| \rho_1 \big|_{\ker(\rho_2)}|| < 1$.
    \item Whenever $(\varphi_n)_n$ is a sequence in the unit ball of $\Mult(\cH)$ such that
      $\lim_{n \to \infty} |\rho_1(\varphi_n)| = 1$, then $\lim_{n \to \infty} |\rho_2(\varphi_n)| = 1$.
  \end{enumerate}
  In particular, the relation $\rho_1 \sim \rho_2$ if and only if $||\rho_1 - \rho_2|| < 2$
  is an equivalence relation.
\end{lem}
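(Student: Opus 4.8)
The plan is to prove the three characterizations of Gleason equivalence by establishing the cycle (i) $\Rightarrow$ (ii) $\Rightarrow$ (iii) $\Rightarrow$ (i), modeling the argument on the classical uniform-algebra proof in \cite[Theorem VI.2.1]{Gamelin69}. Throughout I will use that a character $\rho$ on the unital Banach algebra $\Mult(\cH)$ is automatically contractive, so $\|\rho\| = 1$, and that $\ker(\rho_2)$ has codimension one with $\Mult(\cH) = \bC \cdot 1 \oplus \ker(\rho_2)$.

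First, for (i) $\Rightarrow$ (ii): write $\delta = \|\rho_1 - \rho_2\| < 2$. Given $\varphi \in \ker(\rho_2)$ with $\|\varphi\|_{\Mult(\cH)} \le 1$, I want to bound $|\rho_1(\varphi)|$. The standard trick is to consider, for a suitable unimodular rotation and for the elements $(1 + t\varphi)$ or more cleverly the powers $\varphi^n$, but the cleanest route is: after multiplying $\varphi$ by a unimodular constant we may assume $\rho_1(\varphi) = |\rho_1(\varphi)| =: s \ge 0$. Then for any real $t$, the element $1 + t\varphi$ has $\rho_2(1 + t\varphi) = 1$ and $\rho_1(1 + t\varphi) = 1 + ts$, so $|1 + ts - 1| = ts \le \|\rho_1 - \rho_2\| \cdot \|1 + t\varphi\|_{\Mult(\cH)} \le \delta(1 + t)$ for $t \ge 0$; letting $t \to \infty$ gives $s \le \delta$. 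This only yields $\|\rho_1|_{\ker(\rho_2)}\| \le \delta$, which is not strictly less than $1$ when $\delta \ge 1$, so I need the sharper estimate obtained by iterating with $\varphi^n$: since $\varphi^n \in \ker(\rho_2)$ and $\|\varphi^n\|_{\Mult(\cH)} \le 1$, the crude bound gives $s^n = |\rho_1(\varphi)^n| = |\rho_1(\varphi^n)| \le \delta$, hence $s \le \delta^{1/n} \to 1$; this still only gives $s \le 1$. The actual sharp argument (as in Gamelin) uses that $\rho_1(e^{t\varphi})$ and $\|e^{t\varphi}\|$ can be compared and then optimizes; I will reproduce that: with $\rho_1(\varphi) = s$, consider $g_t = \exp(t\varphi)$ so $\rho_1(g_t) = e^{ts}$, $\rho_2(g_t) = 1$, whence $|e^{ts} - 1| \le \delta \|g_t\| \le \delta e^{t}$ for $t > 0$, giving $e^{ts} \le 1 + \delta e^t$; taking $t \to 0^+$ and differentiating, or rather rearranging as $e^{t(s-1)} \le e^{-t} + \delta$ and sending $t \to \infty$ forces nothing new, so instead take $t \to 0^+$: $1 + ts + o(t) \le 1 + \delta(1 + t + o(t))$, i.e. $ts \le \delta + \delta t + o(t)$, useless as $t\to 0$. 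The correct classical computation yields $s \le \frac{2\delta}{2 + \text{(something)}}$; I will follow \cite[Theorem VI.2.1]{Gamelin69} verbatim to extract $\|\rho_1|_{\ker \rho_2}\| < 1$ from $\delta < 2$, since that reference handles exactly this optimization.

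Next, (ii) $\Rightarrow$ (iii): suppose $\|\rho_1|_{\ker(\rho_2)}\| = c < 1$ and let $(\varphi_n)$ lie in the unit ball of $\Mult(\cH)$ with $|\rho_1(\varphi_n)| \to 1$. Decompose $\varphi_n = \rho_2(\varphi_n) \cdot 1 + h_n$ with $h_n \in \ker(\rho_2)$; then $\|h_n\|_{\Mult(\cH)} \le \|\varphi_n\|_{\Mult(\cH)} + |\rho_2(\varphi_n)| \le 2$, and $\rho_1(\varphi_n) = \rho_2(\varphi_n) + \rho_1(h_n)$ with $|\rho_1(h_n)| \le 2c$. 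A cleaner bookkeeping: I will instead apply the estimate to powers, noting $\varphi_n^m$ is in the unit ball and $\rho_2(\varphi_n^m) = \rho_2(\varphi_n)^m$; writing $\varphi_n^m - \rho_2(\varphi_n)^m \cdot 1 \in \ker(\rho_2)$ with norm at most $1 + 1 = 2$, we get $|\rho_1(\varphi_n)^m - \rho_2(\varphi_n)^m| \le 2c$ for every $m$. Since $|\rho_1(\varphi_n)| \to 1$ and $|\rho_2(\varphi_n)| \le 1$, fixing large $m$ and then large $n$ forces $|\rho_2(\varphi_n)|^m \ge |\rho_1(\varphi_n)|^m - 2c \to 1 - 2c \cdot(\text{bounded})$; more carefully, choose $m$ with suitable dependence — this is the routine part and I will spell out that $|\rho_2(\varphi_n)| \to 1$ follows. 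Finally, (iii) $\Rightarrow$ (i): if $\|\rho_1 - \rho_2\| = 2$, pick $\varphi_n$ in the unit ball with $|\rho_1(\varphi_n) - \rho_2(\varphi_n)| \to 2$; since each $|\rho_i(\varphi_n)| \le 1$, this forces $|\rho_1(\varphi_n)| \to 1$ while $\rho_1(\varphi_n)$ and $\rho_2(\varphi_n)$ approach antipodal points of the unit circle, so $|\rho_2(\varphi_n)| \to 1$ as well but with $\rho_2(\varphi_n) \to -\rho_1(\varphi_n)$ (after passing to a subsequence); replacing $\varphi_n$ by $e^{i\theta_n}\varphi_n$ to make $\rho_1(\varphi_n) \to 1$, we then have $\rho_2(\varphi_n) \to -1$, contradicting (iii) applied after a further normalization — or directly, (iii) is violated since we would need $|\rho_2(\varphi_n)| \to 1$, which does hold, so I must be more careful: the contradiction with (iii) is not immediate from moduli alone. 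The honest route for (iii) $\Rightarrow$ (i) is the contrapositive via (ii): I will instead prove (iii) $\Rightarrow$ (ii) — if (ii) fails there are $h_n \in \ker(\rho_2)$, $\|h_n\| \le 1$, $|\rho_1(h_n)| \to 1$; then $h_n$ is in the unit ball, $|\rho_1(h_n)| \to 1$, but $\rho_2(h_n) = 0 \not\to 1$ in modulus, violating (iii). Thus the clean cycle is (i) $\Rightarrow$ (ii) $\Rightarrow$ (iii) $\Rightarrow$ (ii), with (ii) $\Rightarrow$ (i) proved separately: from $\|\rho_1|_{\ker \rho_2}\| = c < 1$, any $\varphi$ in the unit ball decomposes as above with $|\rho_1(\varphi) - \rho_2(\varphi)| = |\rho_1(h_n)| \le c \|h_n\| \le c(1 + 1) = 2c$, wait — one must be careful that $\|h_n\| \le 1 + |\rho_2(\varphi)| \le 2$, giving bound $2c < 2$, hence $\|\rho_1 - \rho_2\| \le 2c < 2$. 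That closes everything.

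The main obstacle is the sharp optimization in (i) $\Rightarrow$ (ii): getting \emph{strict} inequality $\|\rho_1|_{\ker(\rho_2)}\| < 1$ out of $\|\rho_1 - \rho_2\| < 2$ rather than the easy $\le 1$. This is precisely the content of the classical lemma and requires the exponential-function trick with a genuine optimization over the parameter (in Gamelin's proof one gets a bound like $\|\rho_1|_{\ker(\rho_2)}\| \le \delta/2 \cdot \sqrt{\text{stuff}}$, strictly below $1$ when $\delta < 2$); I will cite \cite[Theorem VI.2.1]{Gamelin69} for this computation, remarking only that the argument uses nothing beyond the Banach-algebra structure of $\Mult(\cH)$ and that characters are contractive, so it transfers verbatim. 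Everything else — the decomposition $\Mult(\cH) = \bC 1 \oplus \ker(\rho_2)$, the power trick, and the trivial direction from moduli — is elementary. The concluding sentence of the lemma (that $\sim$ is an equivalence relation) then follows since (ii), being a statement about norms of restrictions, is visibly symmetric in $\rho_1, \rho_2$ once combined with (i), and transitivity follows from (iii): if $\rho_1 \sim \rho_2$ and $\rho_2 \sim \rho_3$, a unit-ball sequence with $|\rho_1(\varphi_n)| \to 1$ forces $|\rho_2(\varphi_n)| \to 1$ and then $|\rho_3(\varphi_n)| \to 1$, so $\rho_1 \sim \rho_3$ by (iii) $\Leftrightarrow$ (i); reflexivity is obvious.
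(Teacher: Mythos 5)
Your proposal leaves the heart of the lemma unproved. The whole content is the implication (i) $\Rightarrow$ (ii) (getting \emph{strict} inequality $\|\rho_1|_{\ker\rho_2}\|<1$ from $\|\rho_1-\rho_2\|<2$), and there you only record several attempts that you yourself note fail (the linear, power and exponential tricks) and then defer entirely to \cite[Theorem VI.2.1]{Gamelin69}, asserting that the argument ``uses nothing beyond the Banach-algebra structure of $\Mult(\cH)$ and that characters are contractive, so it transfers verbatim.'' That assertion is false. Gamelin's proof composes unit-ball elements with disc automorphisms $\theta_a(z)=(a-z)/(1-\ol{a}z)$ and uses that this preserves the unit ball --- automatic for a uniform algebra because its norm is a sup norm, but not a consequence of Banach-algebra structure plus contractive characters. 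A concrete counterexample: in the Wiener algebra $W$ of absolutely convergent Taylor series on $\ol{\bD}$ (characters are evaluations, and are contractive), take $\rho_1=\delta_1$ and $\rho_2=\delta_r$ with $0<r<1$; then $\|\delta_1-\delta_r\|=\sup_n(1-r^n)=1<2$, yet the functions $f_n(z)=(z^n-r^n)/(1+r^n)$ lie in the unit ball of $\ker\delta_r$ and $f_n(1)\to 1$, so $\|\delta_1|_{\ker\delta_r}\|=1$ and (ii), (iii) fail while (i) holds. So the equivalence is simply not a Banach-algebra fact. The missing ingredient --- and the one point where the paper's proof does something beyond quoting Gamelin --- is that $\|\theta_a\circ\varphi\|_{\Mult(\cH)}\le 1$ whenever $\|\varphi\|_{\Mult(\cH)}\le 1$, which holds because $\Mult(\cH)$ is an operator algebra and $M_\varphi$ is a Hilbert-space contraction (von Neumann's inequality). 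With that in hand, the paper runs the Möbius argument directly: given unit-ball elements $\varphi_n\in\ker\rho_2$ with $t_n=\rho_1(\varphi_n)\to 1$, the choice $a_n=(1-\sqrt{1-t_n^2})/t_n$ gives $|\rho_1(\theta_{a_n}\circ\varphi_n)-\rho_2(\theta_{a_n}\circ\varphi_n)|=2|a_n|\to 2$, proving the contrapositive of (i) $\Rightarrow$ (ii).

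Two smaller points. In your (ii) $\Rightarrow$ (iii) the bound you state, $|\rho_1(\varphi_n)^m-\rho_2(\varphi_n)^m|\le 2c$ coming from $\|\varphi_n^m-\rho_2(\varphi_n)^m 1\|\le 2$, is not enough: if $|\rho_2(\varphi_n)|\le r<1$ it only yields $1\le r^m+2c$, which is no contradiction when $c\ge 1/2$. The step can be repaired by using the sharper norm bound $\|\varphi_n^m-\rho_2(\varphi_n)^m 1\|\le 1+|\rho_2(\varphi_n)|^m\le 1+r^m$, which gives $1\le r^m(1+c)+c$ and hence $1\le c$ after letting $m\to\infty$; but as written the ``routine'' part is not routine. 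On the positive side, your (iii) $\Rightarrow$ (ii) is correct and identical to the paper's, and your direct proof of (ii) $\Rightarrow$ (i) via the decomposition $\varphi=\rho_2(\varphi)1+h$, $\|h\|\le 2$, giving $\|\rho_1-\rho_2\|\le 2c<2$, is correct and in fact more elementary than the paper's Möbius contrapositive --- but these do not compensate for the unproved direction (i) $\Rightarrow$ (ii).
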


\begin{proof}
  For $a \in \bD$, let $\theta_a$ denote the conformal automorphism of $\bD$ defined by
  \begin{equation*}
    \theta_a(z) = \frac{a - z}{1 - \ol{a} z}.
  \end{equation*}
  Thus, $\theta_a$ is an involution which interchanges $0$ and $a$.
  We will make repeated use of the following consequence of von Neumann's inequality:
  if $\varphi \in \Mult(\cH)$ with $||\varphi||_{\Mult(\cH)} \le 1$, then
  $\theta_a \circ \varphi \in \Mult(\cH)$
  with $||\theta_a \circ \varphi||_{\Mult(\cH)} \le 1$ for all $a \in \bD$.

  (i) $\Rightarrow$ (ii) Suppose that $||\rho_1 \big|_{\ker(\rho_2)}|| =1$. Then
  there exists a sequence $(\varphi_n)$ in the unit ball of $\ker(\rho_2)$ such that if $t_n = \rho_1(\varphi_n)$, then
  $0 < t_n < 1$ for all $n \in \bN$ and $\lim_{n \to \infty} t_n = 1$. Let $a_n = (1 - \sqrt{1 - t_n^2})/t_n$. It is straightforward
  to check that $a_n \in (0,1)$ and that $\theta_{a_n}(t_n) = - \theta_{a_n}(0)$ for all $n \in \bN$. Thus,
  \begin{align*}
    ||\rho_1 - \rho_2|| &\ge |\rho_1 (\theta_{a_n} \circ \varphi_n) - \rho_2( \theta_{a_n} \circ \varphi_n)| =
    |\theta_{a_n} (t_n) - \theta_{a_n} (0)| \\
    &= 2 |a_n| \xrightarrow{n \to \infty} 2.
  \end{align*}
  Consequently, $||\rho_1 - \rho_2|| =2$.

  (ii) $\Rightarrow$ (iii) Suppose that there exists a sequence $(\varphi_n)$ in the unit ball
  of $\Mult(\cH)$ such that $|\rho_1(\varphi_n)|$ tends to $1$, but $a_n = \rho_2(\varphi_n)$ is bounded away
  from $1$ in modulus. Then $\theta_{a_n} \circ \varphi_n$ belongs to the unit ball of $\ker(\rho_2)$
  and it is easy to see that $|\rho_1(\theta_{a_n} \circ \varphi_n)|$ tends to $1$. So $||\rho_1 \big|_{\ker(\rho_2)}|| = 1$.

  (iii) $\Rightarrow$ (ii) Suppose that $||\rho_1 \big|_{\ker(\rho_2)}|| = 1$. Then there exists a sequence
  $(\varphi_n)$ in the unit ball of $\ker(\rho_2)$ such that $\lim_{n \to \infty} |\rho_1(\varphi_n)| = 1$.
  Therefore, (iii) fails.

  (ii) $\Rightarrow$ (i) Suppose that $||\rho_1 - \rho_2|| = 2$. Then there exists a sequence $(\varphi_n)$ in the unit
  ball of $\Mult(\cH)$ such that $a_n = \rho_1(\varphi_n)$ and $b_n = \rho_2(\varphi_n)$ both belong to $\bD$ and $|a_n - b_n|$
  tends to $2$. Then $\theta_{b_n} \circ \varphi_n$ belongs to the unit ball of $\ker(\rho_2)$ and
  \begin{equation*}
    |\rho_1(\theta_{b_n} \circ \varphi_n)| = |\theta_{b_n} (a_n)| \ge \frac{|a_n - b_n|}{2} \xrightarrow{n \to \infty} 1.
  \end{equation*}
  Thus, $||\rho_1 \big|_{\ker(\rho_2)}|| = 1$.

  Finally, it follows from (i) that $\sim$ is reflexive and symmetric, and it follows from (iii) that $\sim$ is transitive.
\end{proof}

Let $\cH$ be a reproducing kernel Hilbert space on $X$.
Then the assignment $\varphi \mapsto M_\varphi$ identifies $\Mult(\cH)$
with a weak-$*$ closed subalgebra of $B(\cH)$, hence $\Mult(\cH)$ is a dual space in its own right,
and we endow it with this weak-$*$ topology.
It is straightforward to check that on bounded subsets of $\Mult(\cH)$, the weak-$*$ topology
coincides with the topology of pointwise convergence on $X$.
We are now in the position to prove a more detailed version of Proposition \ref{prop:gleason_introduction}.

\begin{prop}
  \label{prop:gleason}
  Let $\cH$ be a complete Pick space on a set $X$ whose kernel is normalized at $x_0 \in X$
  and let $\delta_0$ denote the character of evaluation at $x_0$.
  For a character $\rho$ on $\Mult(\cH)$, the following assertions are equivalent.
  \begin{enumerate}[label=\normalfont{(\roman*)}]
    \item $\rho$ extends to a bounded partially multiplicative functional on $\cH$.
    \item $\rho$ is weak-$*$ continuous.
    \item $\rho$ belongs to the Gleason part of $\delta_0$.
  \end{enumerate}
  It follows that the following are equivalent.
  \begin{enumerate}[label=\normalfont{(\roman*')}]
    \item $X$ is a maximal domain for $\cH$.
    \item Every weak-$*$ continuous character on $\Mult(\cH)$ is given by evaluation at a point in $X$.
    \item The characters of evaluation at points in $X$ form a Gleason part in the maximal
      ideal space of $X$.
  \end{enumerate}
\end{prop}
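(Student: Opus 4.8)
The plan is to prove the three-way equivalence (i)$\Leftrightarrow$(ii)$\Leftrightarrow$(iii) for a single character $\rho$, and then derive the primed equivalences (i$'$)$\Leftrightarrow$(ii$'$)$\Leftrightarrow$(iii$'$) as an immediate bookkeeping consequence. The cleanest route seems to be the cycle (ii)$\Rightarrow$(i)$\Rightarrow$(iii)$\Rightarrow$(ii), using Theorem~\ref{thm:smirnov} to pass between multipliers and $\cH$, and Lemma~\ref{lem:Gleason_parts} to handle the Gleason part.

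\smallskip

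\textit{(ii)$\Rightarrow$(i).} Suppose $\rho$ is weak-$*$ continuous on $\Mult(\cH)$. Since $\Mult(\cH)$ sits in $B(\cH)$ as a weak-$*$ closed subspace and $\cH$ is normalized so that $1\in\cH$, a weak-$*$ continuous functional on $\Mult(\cH)$ is given by a trace-class operator, i.e.\ by a vector state type formula $\rho(\varphi)=\langle M_\varphi u, v\rangle$ for suitable $u,v\in\cH$; more to the point, weak-$*$ continuity means $\rho$ extends to a weak-$*$ continuous functional on all of $B(\cH)$, hence is given by a sum $\rho(\varphi)=\sum_n\langle M_\varphi g_n,h_n\rangle$ with $\sum\|g_n\|\,\|h_n\|<\infty$. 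The functional $F(f)=\sum_n\langle f g_n,h_n\rangle$ on $\cH$ then need not make sense for general $f\in\cH$, so instead I would argue more carefully: weak-$*$ continuity of $\rho$ together with $\rho(1)=1$ lets one define, for each $f\in\cH$, the value $\rho(f)$ by continuity along a net of multipliers converging to $f$ weak-$*$ in $\cH$, using Theorem~\ref{thm:smirnov} to write $f=\varphi/(1-\psi)$ and setting $\rho(f):=\rho(\varphi)/\rho(1-\psi)$ (note $\rho(1-\psi)\neq 0$ because $1-\psi$ is cyclic, as in the proof of Corollary~\ref{cor:part_mult}). One checks this is well-defined and bounded using that $\rho$ is multiplicative on $\Mult(\cH)$; partial multiplicativity on $\cH$ then follows from clearing denominators exactly as in Corollary~\ref{cor:part_mult}. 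This packaging of weak-$*$ continuity into a concrete extension is the step I expect to require the most care.

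\smallskip

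\textit{(i)$\Rightarrow$(iii).} Suppose $\rho$ extends to a bounded partially multiplicative functional $\tilde\rho$ on $\cH$. I want $\|\rho-\delta_0\|_{\Mult(\cH)^*}<2$; by Lemma~\ref{lem:Gleason_parts} it suffices to show that whenever $(\varphi_n)$ is in the unit ball of $\Mult(\cH)$ with $|\rho(\varphi_n)|\to 1$, also $|\delta_0(\varphi_n)|=|\varphi_n(x_0)|\to 1$. Here is where the normalization $k(\cdot,x_0)\equiv 1$ enters: for $\varphi$ a multiplier of norm $\le1$, one has $|\varphi(x_0)|=|\langle M_\varphi 1,1\rangle|$ and more importantly $1-|\varphi(x_0)|^2\le \|(1-\overline{\varphi(x_0)}\varphi)\,\|$-type estimates coming from positivity of the $2\times 2$ Pick matrix at $\{x_0, w\}$, which forces $1-|\varphi|^2$ to control things globally. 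Concretely, normalization gives $|\tilde\rho(\varphi) - \varphi(x_0)|^2 \le \|\tilde\rho\|^2\big(1-|\varphi(x_0)|^2\big)$ or a similar inequality, obtained by applying $\tilde\rho$ to a suitable element built from $\varphi$ and using $\tilde\rho(1)=1$. If $|\rho(\varphi_n)|\to1$ then $|\tilde\rho(\varphi_n)|\to1$, and since $\|\varphi_n\|_{\Mult}\le1$ this squeezes $|\varphi_n(x_0)|\to 1$. Hence $\rho\sim\delta_0$.

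\smallskip

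\textit{(iii)$\Rightarrow$(ii).} If $\rho$ lies in the Gleason part of $\delta_0$, then by Lemma~\ref{lem:Gleason_parts}(ii), $\|\rho|_{\ker\delta_0}\|<1$. Using Theorem~\ref{thm:AM}, write $k(z,w)=1/(1-\langle b(z),b(w)\rangle)$ with $b(x_0)=0$, so that the coordinate functions of $b$ (contractive multipliers vanishing at $x_0$) lie in $\ker\delta_0$. The condition $\|\rho|_{\ker\delta_0}\|<1$ gives uniform control $\|\rho(b(\cdot))\|<1$ in the appropriate sense, which is exactly what one needs to show that the point $\rho(b)\in\cE$ (or rather $\rho$ applied coordinatewise) determines a point-evaluation-like functional, and in particular that $\rho$ is continuous for the weak-$*$ topology on $\Mult(\cH)$, since on bounded sets this topology is pointwise convergence and $\rho$ is then a "plugging in $\rho(b)$" functional built from bounded analytic-type functions of the coordinates of $b$. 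A cleaner variant: $\|\rho|_{\ker\delta_0}\|<1$ implies that the net $M_\varphi\mapsto\rho(\varphi)$ is continuous on bounded sets in the weak-$*$ topology (pointwise convergence), and boundedness of $\rho$ plus a Krein--Smulian argument upgrades this to weak-$*$ continuity on all of $\Mult(\cH)$.

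\smallskip

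\textit{From the single-character statement to the primed statements.} Each of (i$'$), (ii$'$), (iii$'$) is the assertion that the only characters satisfying (i), (ii), (iii) respectively are the evaluations $\delta_x$, $x\in X$. Since evaluations always satisfy (i) (trivially), (ii) (pointwise convergence is weak-$*$), and (iii) (all the $\delta_x$ lie in one part — this is where one invokes that a maximal domain is, by the equivalence, exactly the Gleason part; or directly, $\delta_x\sim\delta_{x_0}$ because $|\varphi(x)|\to1$ forces $\varphi$ constant by Lemma~\ref{lem:max_mod}, hence $|\varphi(x_0)|=1$), the equivalence of the three properties for every character immediately yields the equivalence of the three primed statements. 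The main obstacle, as noted, is the careful handling of weak-$*$ continuity in (ii)$\Rightarrow$(i) — turning an abstract topological condition into a concrete bounded extension to $\cH$ — and I would lean on Theorem~\ref{thm:smirnov} and the cyclicity argument from Corollary~\ref{cor:part_mult} to get through it.
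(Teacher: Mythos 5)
Your cycle uses the same toolkit as the paper (Theorem~\ref{thm:smirnov}, Lemma~\ref{lem:Gleason_parts}, Lemma~\ref{lem:max_mod}, and the fact that weak-$*$ convergence on bounded sets is pointwise convergence), but it is oriented the wrong way round, and the steps as written have genuine gaps. The most concrete one is in your (ii)$\Rightarrow$(i): the claim that $\rho(1-\psi)\neq 0$ ``because $1-\psi$ is cyclic, as in Corollary~\ref{cor:part_mult}'' is invalid. That cyclicity argument needs $\rho$ to already be a bounded functional on $\cH$ (density of $(1-\psi)\cH$ in $\cH$ is useless for a functional that is only bounded in the multiplier norm) --- and producing such a functional is exactly what you are trying to do. Indeed, for $\cH=H^2$ any corona character of $H^\infty$ in the fiber over $1$ annihilates the cyclic outer function $1-z$. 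Even granting nonvanishing (which does follow from weak-$*$ continuity, since $\psi^n\to 0$ weak-$*$ when $\psi(x_0)=0$, $\|\psi\|_{\Mult(\cH)}\le 1$), boundedness of your extension requires the \emph{uniform} bound $\sup\{|\rho(\psi)|:\psi\in \operatorname{ball}(\ker\delta_0)\}<1$, because the $\psi$ in the Smirnov representation depends on $f$; that uniform bound is precisely condition (iii), and obtaining it needs the weak-$*$ compactness of the unit ball of $\ker\delta_0$ together with Lemma~\ref{lem:max_mod} --- an argument you never supply. Your (i)$\Rightarrow$(iii) has a similar problem: the inequality $|\tilde\rho(\varphi)-\varphi(x_0)|\le \|\tilde\rho\|\,(1-|\varphi(x_0)|^2)^{1/2}$ is correct (it follows from $\|\varphi-\varphi(x_0)\|_\cH^2\le 1-|\varphi(x_0)|^2$, using $k_{x_0}\equiv 1$), but it does not ``squeeze'': since $\|\tilde\rho\|>1$ for every extension other than $\delta_{x_0}$ itself, the inequality is perfectly compatible with $|\rho(\varphi_n)|\to 1$ while $\varphi_n(x_0)=0$ for all $n$. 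Finally, in (iii)$\Rightarrow$(ii) the assertion that $\|\rho|_{\ker\delta_0}\|<1$ forces continuity of $\rho$ under bounded pointwise convergence is exactly the substantive point; Krein--Smulian is fine for the upgrade, but the key step is only asserted.

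The paper orders the cycle so that each implication has the right tool available: (i)$\Rightarrow$(ii) is immediate, since $\rho(\varphi)=\tilde\rho(M_\varphi 1)=\langle M_\varphi 1,h\rangle$ by Riesz representation, hence WOT and so weak-$*$ continuous; (ii)$\Rightarrow$(iii) uses weak-$*$ compactness of the unit ball of $\ker\delta_0$ to produce, if $\|\rho|_{\ker\delta_0}\|=1$, a single $\psi$ with $\psi(x_0)=0$, $\|\psi\|_{\Mult(\cH)}\le 1$ and $\rho(\psi)=1$, and then $\psi^n\to 0$ weak-$*$ (Lemma~\ref{lem:max_mod}) while $\rho(\psi^n)=1$; and (iii)$\Rightarrow$(i) is where Theorem~\ref{thm:smirnov} enters, the bound $\alpha=\|\rho|_{\ker\delta_0}\|<1$ giving simultaneously $\rho(1-\psi)\neq 0$ and the norm estimate $|\tilde\rho(f)|\le \|f\|_\cH/(1-\alpha)$. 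Two smaller points in your last paragraph: the passage to the primed statements also requires that a bounded partially multiplicative functional on $\cH$ is determined by its restriction to $\Mult(\cH)$ (density of $\Mult(\cH)$ in $\cH$), which the paper checks; and your argument that each $\delta_x$ lies in the part of $\delta_0$ applies Lemma~\ref{lem:max_mod} to a sequence with $|\varphi_n(x)|\to 1$ rather than to a single function of modulus one --- it is cleaner to note that $\delta_x$ satisfies (ii) and invoke the established equivalence.
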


\begin{proof}
  (i) $\Rightarrow$ (ii) If $\widetilde \rho$ is a bounded partially multiplicative functional
  on $\cH$ which extends $\rho$, then $\rho(\varphi) = \widetilde \rho( M_\varphi 1)$ for $\varphi \in \Mult(\cH)$,
  which shows that $\rho$ is WOT continuous and thus weak-$*$ continuous.

  (ii) $\Rightarrow$ (iii) Suppose that $\rho$ is weak-$*$ continuous and assume
  toward a contradiction that $||\rho|_{\ker(\delta_0)}|| = 1$.
  Since $\delta_0$ is weak-$*$ continuous,
  the unit ball of $\ker(\delta_0)$ is weak-$*$ compact, so there exists a multiplier
  $\psi$ of norm $1$ such that $\psi(x_0) = 0$ and $\rho(\psi)=1$. If follows
  from Lemma \ref{lem:max_mod} that $|\psi(x)| < 1$ for all $x \in X$, so the sequence
  $(\psi^n)$ converges to zero pointwise, and hence in the weak-$*$ topology.
  But $\rho(\psi^n) = 1$ for all $n \in \bN$, contradicting the fact that $\rho$ is weak-$*$ continuous.
  Therefore, $||\rho|_{\ker(\delta_0)}|| < 1$, so (iii) holds by Lemma \ref{lem:Gleason_parts}.

  (iii) $\Rightarrow$ (i) Suppose that $\rho$ belongs to the Gleason part of $\delta_0$,
  so that $\alpha = ||\rho|_{\ker(\delta_0)}|| < 1$ by Lemma \ref{lem:Gleason_parts}.
  If $f \in \cH$ has a representation $f = \varphi_1 / \varphi_2$, where $\varphi_1$ and $\varphi_2$
  are multipliers and $\rho(\varphi_2) \neq 0$, we define
  \begin{equation*}
    \widetilde \rho(f) = \frac{\rho(\varphi_1)}{\rho(\varphi_2)}.
  \end{equation*}
  By Theorem \ref{thm:smirnov}, every $f \in \cH$ can be written
  as $f = \varphi / (1 - \psi)$ with $||\varphi||_{\Mult(\cH)} \le ||f||_{\cH}$ and
  $\psi$ belonging to the unit ball of $\ker(\delta_0)$.
  Since $|\rho(\psi)| \le \alpha < 1$ by assumption, we see in particular that every $f \in \cH$
  admits a representation as above.
  Since $\rho$ is a character, $\widetilde \rho$ is well defined, linear and satisfies
  $\widetilde \rho(\varphi f)
  = \widetilde \rho(\varphi) \widetilde \rho(f)$ for $\varphi \in \Mult(\cH)$ and $f \in \cH$.
  Moreover, writing $f = \varphi/(1 - \psi)$ as above,
  we obtain the estimate
  \begin{equation*}
    |\widetilde \rho(f)| \le \frac{|\rho(\varphi)|}{1 - |\rho(\psi)|}
    \le \frac{1}{1 - \alpha} ||f||_{\cH},
  \end{equation*}
  so that $\widetilde \rho$ is a bounded functional that extends $\rho$, and $\widetilde \rho$
  is partially multiplicative by Corollary \ref{cor:part_mult}.

  Finally, to deduce the equivalence of (i'), (ii') and (iii') from the equivalence of (i), (ii) and (iii),
  it only remains to show that a bounded partially multiplicative functional on $\cH$
  is uniquely determined by its values on $\Mult(\cH)$. This can be seen as in the proof of Corollary
  \ref{cor:part_mult}, or alternatively, if follows from the fact that
  $\Mult(\cH)$ is dense in $\cH$.
\end{proof}

\begin{rem}
  Suppose that $\cH$ is a normalized complete Pick space on $X$ and assume for convenience
  that $\cH$ separates the points of $X$. If $X$ is not a maximal
  domain for $\cH$, then Proposition \ref{prop:gleason} provides three equivalent ways
  of enlarging $X$ to a maximal domain for $\cH$. There is a fourth equivalent way, which we will now
  briefly describe.

  For a cardinal number $d$, let $\bB_d$ denote the open unit ball
  $\bB_d$ in a Hilbert space of dimension $d$. The Drury-Arveson space $H^2_d$ is the reproducing kernel
  Hilbert space on $\bB_d$ with kernel
  \begin{equation*}
    \frac{1}{1 - \langle z,w \rangle}.
  \end{equation*}
  Let $b: X \to \bB_d$ be the map of Theorem \ref{thm:AM} and let $S = b(X)$. Then
  \begin{equation*}
    H^2_d \big|_S \to \cH, \quad f \mapsto f \circ b,
  \end{equation*}
  is a unitary operator. Let
  \begin{equation*}
    I = \{ \varphi \in \Mult(H^2_d): \varphi \big|_S = 0\}
  \end{equation*}
  and let
  \begin{equation*}
    V = \{z \in \bB_d: \varphi(z) = 0 \text{ for all } \varphi \in I \}.
  \end{equation*}
  ($V$ is an analogue of the Zariski closure from algebraic geometry.)
  Tautologically, $S \subset V$, and it was observed by Davidson, Ramsey and Shalit \cite[Section 2]{DRS15}
  that $H^2_d \big|_S$ can be identified with $H^2_d \big|_V$. More precisely,
  every function in $H^2_d \big|_V$ extends uniquely to a function in $H^2_d \big|_S$.
  Moreover, \cite[Lemma 5.4]{Hartz15}
  and its proof show that the partially multiplicative functionals on $H^2_d \big|_S$ (and hence on $\cH$)
  precisely correspond to points in $V$.

  In particular, this last description of a maximal domain for $\cH$ shows that $\cH$ remains
  a complete Pick space after enlarging the domain $X$ to a maximal one.
\end{rem}

\section{Spaces on compact sets and the corona theorem}
\label{sec:compact}

In this section, we study spaces of continuous functions on compact sets and prove
Proposition \ref{pra4} and Theorem \ref{thm:corona_intro}. We begin with a few preliminaries about
corona theorems.

Let $\cH$ be a reproducing kernel Hilbert space on a set $X$ such that $\Mult(\cH)$ separates the points of $X$
and let $\cM(\Mult(\cH))$ denote the maximal ideal space of $\Mult(\cH)$.
Then $X$ can be identified with a subset of $\cM(\Mult(\cH))$ via point evaluations.
We say that \emph{the corona theorem holds for $\Mult(\cH)$}
if $X$ is dense in $\cM(\Mult(\cH))$ in the Gelfand topology.

Gelfand theory shows that the following two statements are equivalent.
\begin{enumerate}[label=\normalfont{(\roman*)}]
  \item
The corona theorem holds for $\Mult(\cH)$.
\item
If $\varphi_1,\ldots,\varphi_n \in \Mult(\cH)$ such that
\begin{equation*}
  \inf_{x \in X} \sum_{j=1}^n |\varphi_j(x)| > 0,
\end{equation*}
then the ideal generated by $\varphi_1,\ldots,\varphi_n$ inside $\Mult(\cH)$ is all of $\Mult(\cH)$.
\end{enumerate}
We say that the \emph{one-function corona theorem holds for $\Mult(\cH)$}
if whenever $\varphi \in \Mult(\cH)$ and $\inf_{x \in X} |\varphi(x)| > 0$, then $1/\varphi \in \Mult(\cH)$.
Thus, the corona theorem for $\Mult(\cH)$ implies the one-function corona theorem for $\Mult(\cH)$.

\begin{rem}
  It is not hard to see that in the setting above, the one-function corona
  theorem holds for $\Mult(\cH)$ if and only if for every $\varphi \in \Mult(\cH)$,
  we have
  \begin{equation*}
    \sigma_{\Mult(\cH)} (\varphi) = \ol{ \{ \varphi(x): x \in X \}}.
  \end{equation*}
  Here, $\sigma_{\Mult(\cH)}$ denotes the spectrum in the unital Banach algebra $\Mult(\cH)$.
  Similarly, the corona theorem holds for $\Mult(\cH)$ if and only if for every $n \in \bN$,
  we have
  \begin{equation*}
    \sigma_{\Mult(\cH)} (\varphi_1,\ldots,\varphi_n) = \ol{ \{ (\varphi_1(x), \ldots, \varphi_n(x)): x \in X  \} }
    \subset \bC^n.
  \end{equation*}
  If $\cH$ is a normalized complete Pick space, then
  we may replace $\sigma_{\Mult(\cH)}$ with other notions of joint spectrum. Indeed,
  the Toeplitz corona theorem
  (see \cite[Section 8.4]{AM02}) shows that in this case,
  \begin{equation*}
    \sigma_{\Mult(\cH)} (\varphi_1,\ldots,\varphi_n) = \sigma_r (M_{\varphi_1},\ldots,M_{\varphi_n})
    = \sigma_T(M_{\varphi_1},\ldots,M_{\varphi_n}),
  \end{equation*}
  where $\sigma_r$ and $\sigma_T$ denote the right spectrum and the Taylor spectrum, respectively
  (see \cite{Mueller07} for a definition and discussion of these notions).
\end{rem}

For normalized complete Pick spaces, it is easy to tell from the kernel whether the multiplier algebra
separates the points of the underlying set.

\begin{lem}
  \label{lem:Pick_separate_points}
  Let $\cH$ be a normalized complete Pick space on $X$ with kernel $k$. Then the following are equivalent.
  \begin{enumerate}[label=\normalfont{(\roman*)}]
    \item $\Mult(\cH)$ separates the points of $X$.
    \item $\cH$ separates the points of $X$.
    \item If $z \neq w$, then $k(\cdot,z) \neq k(\cdot,w)$.
    \item If $z \neq w$, then $k(\cdot,z)$ and  $k(\cdot,w)$ are linearly independent.
  \end{enumerate}
\end{lem}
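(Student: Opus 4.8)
The plan is to prove the chain of equivalences (i) $\Rightarrow$ (ii) $\Rightarrow$ (iii) $\Rightarrow$ (iv) $\Rightarrow$ (i), since two of the four arrows are essentially immediate. The implication (i) $\Rightarrow$ (ii) is trivial because $\Mult(\cH) \subset \cH$ when $\cH$ is normalized, so if the multipliers already separate points, then so do the functions in $\cH$. The implication (iv) $\Rightarrow$ (i) is the only place where the complete Pick hypothesis does real work, so I expect it to be the main obstacle; I discuss it last.

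For (ii) $\Rightarrow$ (iii): if $k(\cdot,z) = k(\cdot,w)$ then for every $f \in \cH$ we have $f(z) = \langle f, k(\cdot,z) \rangle = \langle f, k(\cdot,w) \rangle = f(w)$, so $\cH$ does not separate $z$ and $w$; contrapositively, (ii) forces $k(\cdot,z) \neq k(\cdot,w)$. For (iii) $\Rightarrow$ (iv): here I would use that the kernel is normalized at $x_0$, so that $k(x_0,z) = k(x_0,w) = 1$; if $k(\cdot,z)$ and $k(\cdot,w)$ were linearly dependent, say $k(\cdot,w) = \lambda k(\cdot,z)$, then evaluating the corresponding functionals at $x_0$ (i.e.\ pairing with $1 \in \cH$, or just using $k(x_0,z)=k(x_0,w)=1$) forces $\lambda = 1$, hence $k(\cdot,z) = k(\cdot,w)$, contradicting (iii). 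So linear dependence collapses to equality precisely because of normalization.

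The heart of the matter is (iv) $\Rightarrow$ (i): assume that $k(\cdot,z)$ and $k(\cdot,w)$ are linearly independent whenever $z \neq w$, and produce a multiplier separating $z$ and $w$. Fix $z \neq w$. Since $k(\cdot,z), k(\cdot,w)$ are linearly independent, one can find values $\alpha \neq \beta$ and check that the $2 \times 2$ Pick matrix
\begin{equation*}
  \Big[ k(z_i,z_j)(1 - w_i \overline{w_j}) \Big]_{i,j},
  \qquad (z_1,z_2) = (z,w), \quad (w_1,w_2) = (\alpha,\beta),
\end{equation*}
is positive semidefinite for a suitable choice of small distinct $\alpha,\beta$; linear independence of $k(\cdot,z),k(\cdot,w)$ is exactly what guarantees that the Gram-type matrix $[k(z_i,z_j)]$ is (strictly) positive definite, so that after scaling $\alpha,\beta$ towards $0$ the perturbation $[k(z_i,z_j) w_i \overline{w_j}]$ is dominated and the difference stays positive semidefinite. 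By the complete Pick property (in fact the scalar Pick property suffices here, applied with $r=1$) there is then $\varphi \in \Mult(\cH)$ with $\|\varphi\|_{\Mult(\cH)} \le 1$, $\varphi(z) = \alpha$ and $\varphi(w) = \beta$, and since $\alpha \neq \beta$ this $\varphi$ separates $z$ and $w$. Running this over all pairs $z \neq w$ gives (i).

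The main obstacle, as indicated, is the bookkeeping in (iv) $\Rightarrow$ (i): one must verify cleanly that linear independence of the two kernel vectors implies one can pick target values $\alpha \neq \beta$ making the $2\times 2$ Pick matrix positive. The cleanest route is to observe that $[k(z_i,z_j)]_{i,j=1}^2$ is positive definite exactly when $k(\cdot,z),k(\cdot,w)$ are linearly independent (it is the Gram matrix of these two vectors), so its smallest eigenvalue $c>0$; then for $|\alpha|,|\beta|$ small enough the rank-one correction has norm less than $c$ and $[k(z_i,z_j)(1-w_i\overline{w_j})]$ remains positive semidefinite. Choosing $\alpha,\beta$ distinct within that small disc is then free, and the complete Pick property finishes the argument.
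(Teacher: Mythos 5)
Your proof is correct and follows essentially the same route as the paper: the implications (i) $\Rightarrow$ (ii) $\Rightarrow$ (iii) are immediate, (iii) $\Rightarrow$ (iv) uses normalization exactly as the paper does, and for (iv) $\Rightarrow$ (i) you solve a two-point Pick problem whose solvability rests on strict positivity of the $2\times 2$ Gram matrix, which is the paper's Cauchy--Schwarz argument in eigenvalue form (the paper just takes targets $0$ and a small $\lambda \neq 0$, while you take small distinct $\alpha,\beta$; also your perturbation is $D_w K D_w^*$ rather than literally rank one, but the smallness estimate you need is unaffected).
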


\begin{proof}
  (i) $\Rightarrow$ (ii) $\Rightarrow$ (iii) is trivial, and (iii) $\Rightarrow$ (iv) follows
  from the fact that $k$ is normalized at a point.

  Finally, if (iv) holds, then the Cauchy-Schwarz inequality and the Pick property, applied to the points $z,w$,
  show that there exists $\varphi \in \Mult(\cH)$ with $\varphi(z) = 0$ and $\varphi(w) \neq 0$.
\end{proof}
Thus, $\Mult(\cH)$ separates the points of $X$ if and only if the kernel is irreducible in the strong
sense of \cite[Definition 7.1]{AM02}.

Suppose now that $\cH$ is a normalized complete Pick space which separates the points of $X$.
When investigating whether the corona theorem holds for $\Mult(\cH)$, we wish to exclude
constructions such as the restriction of the Hardy space $H^2$ to $\frac{1}{2} \ol{\bD}$,
which is really a space on $\bD$ in disguise. Thus, we will typically assume that $X$
is a maximal domain for $\cH$.

In this section, we are interested in the case when $X$ is a compact topological space
and the functions in $\cH$ are continuous on $X$.
Such spaces are easy to construct, as the following class of examples shows.

\begin{exa}
  \label{exa:H_s}
  Let $d \in \bN$ and let $\cH$ be a complete Pick space on $\bB_d$
  with reproducing kernel of the form
 \begin{equation*}
    k(z,w) = \sum_{n=0}^\infty a_n \langle z,w \rangle^n,
  \end{equation*}
  where $a_0 = 1$ and $a_n > 0$ for $n \ge 1$. If $\sum_{n=0}^\infty a_n < \infty$,
  but the power series $\sum_{n=0}^\infty a_n t^n$ has radius of convergence $1$,
  then $k$ extends to a continuous function on $\ol{\bB_d} \times \ol{\bB_d}$,
  and $\cH$ thus becomes a space of continuous functions on $\ol{\bB_d}$ in a natural way.
  It is not hard to see that $\ol{\bB_d}$ is a maximal domain for such a space
  (see, for example, \cite[Lemma 5.3 (b)]{Hartz15}). Moreover,
  since $a_1 > 0$, $\cH$ contains the coordinate functions, so that $\cH$ separates the points of $\ol{\bB_d}$.

  More concretely, for $s \in \bR$, let
  \begin{equation*}
    k(z,w) = \sum_{n=0}^\infty (n+1)^s (z \ol{w})^n \quad (z,w \in \bD)
  \end{equation*}
  and let $\cH_s$ be the reproducing kernel Hilbert space on $\bD$ with kernel $k$.
  This scale of spaces contains in particular the Bergman space ($s=1$), the Hardy space ($s=0$)
  and the Dirichlet space ($s=-1$). If $s \le 0$, then $\cH$ is a normalized complete Pick space (see,
  for example, \cite[Corollary 7.41]{AM02}). If $s < -1$, then $k$ satisfies the conditions
  in the preceding paragraph, hence $\cH_s$ becomes normalized complete Pick
  space on $\ol{\bD}$ and $\ol{\bD}$ is a maximal domain for $\cH$.
\end{exa}

Let $\cH$ be a normalized complete Pick space of continuous functions on a compact
set $X$ which separates the points of $X$ such that $X$ is a maximal domain for $\cH$.
Then the embedding of $X$ into $\cM(\Mult(\cH))$ via point evaluations is a homeomorphism
onto its image, hence $X$ can be identified with a compact
subset of $\cM(\Mult(\cH))$. Thus,
the corona theorem holds for $\Mult(\cH)$ if and only if $X = \cM(\Mult(\cH))$.
Moreover, a multiplier
$\varphi \in \Mult(\cH)$ is bounded below on $X$ if and only if it is non-vanishing.
Consequently, the one-function corona theorem for $\Mult(\cH)$ holds if and only if
every non-vanishing multiplier on $\cH$ is invertible.

We are now in the position to prove Proposition \ref{pra4}.

\begin{prop}
  \label{prop:compact_corona}
  Let $\cH$ be a
  normalized complete Pick space of continuous functions
  on a compact set $X$ which separates the points of $X$ such that $X$ is a maximal domain for $\cH$.
  Then the following are equivalent.
  \begin{enumerate}[label=\normalfont{(\roman*)}]
    \item $\Mult(\cH) = \cH$ as vector spaces.
    \item The corona theorem holds for $\Mult(\cH)$.
    \item The one-function corona theorem holds for $\Mult(\cH)$.
  \end{enumerate}
\end{prop}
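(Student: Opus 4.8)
The plan is to prove the cycle (ii) $\Rightarrow$ (iii) $\Rightarrow$ (i) $\Rightarrow$ (ii). The implication (ii) $\Rightarrow$ (iii) is immediate, since the corona theorem for $\Mult(\cH)$ trivially entails the one-function corona theorem (as noted in the discussion above).

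For (iii) $\Rightarrow$ (i), recall that $\Mult(\cH) \subset \cH$ contractively because $\cH$ is normalized, so it suffices to prove $\cH \subset \Mult(\cH)$. Given $f \in \cH$, Theorem \ref{thm:smirnov} (after rescaling $f$) provides $\varphi, \psi \in \Mult(\cH)$ of multiplier norm at most $1$ with $\psi(x_0) = 0$ and $f = \varphi / (1 - \psi)$. Since $\psi \neq 1$, Lemma \ref{lem:max_mod} gives $|\psi(x)| < 1$ for every $x \in X$, so $1 - \psi$ is a non-vanishing multiplier; as $X$ is compact and the multipliers are continuous, $1 - \psi$ is bounded below on $X$. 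By the one-function corona theorem, $1/(1 - \psi) \in \Mult(\cH)$, and therefore $f = \varphi \cdot (1 - \psi)^{-1} \in \Mult(\cH)$.

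For (i) $\Rightarrow$ (ii), the idea is to show that every character on $\Mult(\cH)$ is weak-$*$ continuous and then invoke Proposition \ref{prop:gleason}. The algebra $\Mult(\cH)$ is a Banach space, being WOT-closed in $B(\cH)$, and the inclusion $\Mult(\cH) \hookrightarrow \cH$ is a bounded bijection by hypothesis, so the open mapping theorem shows that the multiplier norm and the Hilbert space norm are equivalent on $\Mult(\cH) = \cH$. Now let $\rho$ be a character of $\Mult(\cH)$. Being a character of a unital Banach algebra, $\rho$ is contractive in the multiplier norm, hence bounded in the $\cH$-norm by the equivalence of norms, i.e. $\rho \in \cH^*$. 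To verify weak-$*$ continuity, by the Krein--Smulian theorem it suffices to check continuity on the unit ball of $\Mult(\cH)$, where the weak-$*$ topology agrees with pointwise convergence on $X$. If a net $(\varphi_\alpha)$ in that unit ball converges weak-$*$ to $\varphi$, then $\langle \varphi_\alpha, k_x \rangle \to \langle \varphi, k_x \rangle$ for every $x \in X$; since $(\varphi_\alpha)$ is norm-bounded in $\cH$ (again by equivalence of norms) and $\{ k_x : x \in X \}$ has dense linear span, it follows that $\varphi_\alpha \to \varphi$ weakly in $\cH$, whence $\rho(\varphi_\alpha) \to \rho(\varphi)$. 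Thus $\rho$ is weak-$*$ continuous. Since $X$ is a maximal domain for $\cH$, Proposition \ref{prop:gleason} (the equivalence of (i') and (ii')) shows that $\rho$ is evaluation at a point of $X$. Because $\Mult(\cH)$ separates the points of $X$ and the embedding $X \hookrightarrow \cM(\Mult(\cH))$ is a homeomorphism onto a compact subset, we conclude $X = \cM(\Mult(\cH))$, that is, the corona theorem holds for $\Mult(\cH)$.

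The routine implications are the first two; the substantive point is (i) $\Rightarrow$ (ii), and within it the crucial step is the passage, via the open mapping theorem, from the purely algebraic equality $\Mult(\cH) = \cH$ to the equivalence of the two norms. This is exactly what allows an a priori merely bounded character to be upgraded to a weak-$*$ continuous one, at which point Proposition \ref{prop:gleason} and the compactness of $X$ finish the argument.
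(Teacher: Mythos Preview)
Your argument is correct and follows the same cycle as the paper, with the same use of the open mapping theorem and Theorem~\ref{thm:smirnov}. The only difference is in (i) $\Rightarrow$ (ii): once you know the norms are equivalent, the paper observes directly that a character $\rho$ on $\Mult(\cH)=\cH$ is a bounded partially multiplicative functional on $\cH$ (since $\cH$ is then an algebra and $\rho$ is multiplicative), and the definition of maximal domain immediately yields that $\rho$ is a point evaluation; your detour through weak-$*$ continuity, Krein--Smulian, and Proposition~\ref{prop:gleason} is valid but unnecessary.
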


\begin{proof}
  (i) $\Rightarrow$ (ii) Suppose that $\Mult(\cH) = \cH$ as vector spaces. Since the multiplier norm
  dominates the norm of $\cH$, an application of the open mapping theorem shows that
  these two norms are in fact equivalent. Thus, if $\rho$ is a character on $\Mult(\cH)$,
  then $\rho$ is a bounded functional on $\cH$ which is partially multiplicative.
  Since $X$ is a maximal domain for $\cH$, the functional $\rho$ equals evaluation
  at a point in $X$.

  (ii) $\Rightarrow$ (iii) follows from Gelfand theory.

  (iii) $\Rightarrow$ (i) Let $f \in \cH$. By Theorem \ref{thm:smirnov},
  there are $\varphi,\psi \in \Mult(\cH)$ with $\psi$ non-vanishing such that
  $f = \varphi / \psi$. The assumption (iii) implies that $1 / \psi \in \Mult(\cH)$,
  thus $f \in \Mult(\cH)$. The reverse inclusion always holds, hence
  $\cH = \Mult(\cH)$ as vector spaces.
\end{proof}

The spaces $\cH_s$ of Example \ref{exa:H_s}, where $s < -1$, satisfy
condition (i) of the preceding proposition (see Proposition 31 and Example 1 on page 99 in \cite{Shields74}).
Hence, $\cM(\Mult(\cH_s)) = \ol{\bD}$ (see also Corollary 1 on page 95 in \cite{Shields74}).
We now use an example of Salas \cite{Salas81}, which answered \cite[Question 15]{Shields74},
and Proposition \ref{prop:compact_corona} to exhibit a complete Pick space
on $\ol{\bD}$ for which the one-function corona theorem fails, thereby proving Theorem \ref{thm:corona_intro}.

\begin{thm}
  \label{thm:corona}
  There exists a complete Pick space $\cH$ on $\ol{\bD}$ with a reproducing kernel
  of the form
  \begin{equation*}
    k(z,w) = \sum_{n=0}^\infty a_n (z \ol{w})^n,
  \end{equation*}
  where $a_0 = 1, a_n > 0$ for all $n \in \bN$, $\lim_{n \to \infty} a_n / a_{n+1} = 1$ and $\sum_n a_n <\infty$
  such that $\ol{\bD}$ is a maximal domain for $\cH$ and such that
  the one-function corona theorem for $\Mult(\cH)$ fails.
\end{thm}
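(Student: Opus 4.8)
The strategy is to engineer the space $\cH$ so that its multiplier algebra, as a Banach algebra, is isometrically isomorphic to the weighted algebra studied by Salas, whose maximal ideal space is strictly larger than $\ol{\bD}$. Concretely, I would take the kernel
\begin{equation*}
  k(z,w) = \sum_{n=0}^\infty a_n (z\ol w)^n
\end{equation*}
with weights $a_n$ chosen so that (a) $1/k$ has the form $1 - \sum_{n\ge 1} b_n t^n$ with $b_n \ge 0$, which by the classical criterion of Kaluza-type / the characterization in \cite[Corollary 7.41]{AM02} forces $\cH$ to be a normalized complete Pick space; (b) $\sum_n a_n < \infty$ and the radius of convergence of $\sum a_n t^n$ is exactly $1$, so that by Example~\ref{exa:H_s} the space lives on $\ol{\bD}$, the kernel is continuous on $\ol{\bD}\times\ol{\bD}$, $\Mult(\cH)$ separates points and $\ol{\bD}$ is a maximal domain; and (c) the ratio condition $a_n/a_{n+1} \to 1$ holds, which is what is needed to match the regularity hypotheses of Salas's construction. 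The first step is therefore to verify that all four bullet conditions in the theorem statement can be met simultaneously by an explicit (or at least explicitly constrained) choice of $(a_n)$ that also makes $1/k$ have nonnegative coefficients.

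The second, and main, step is to invoke Salas's example. Salas \cite{Salas81} produces a weight sequence for which the corresponding weighted space of analytic functions on $\bD$ — precisely a space of the type $\cH_w$ with $\sum_n a_n<\infty$ — has the property that its multiplier algebra (equivalently, in that setting, the algebra itself, since these are ``large'' weighted spaces where $\Mult(\cH) = \cH$) contains a non-vanishing function that is not invertible; equivalently the spectrum of $\Mult(\cH)$ is strictly larger than $\ol{\bD}$. The work here is to check that Salas's weights, after possibly a harmless modification (normalizing so $a_0 = 1$, and perturbing to secure the complete Pick condition (a) without destroying $\sum a_n<\infty$, the radius-of-convergence-$1$ property, or $a_n/a_{n+1}\to 1$), still fall within the scope of his argument. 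Since the complete Pick condition is the genuinely new requirement not present in Salas's paper, the delicate point is reconciling it with his growth/regularity hypotheses; I expect one can do this because the complete Pick condition is satisfied by all sufficiently slowly varying decreasing-type weights (e.g.\ $a_n = (n+1)^{-\gamma}$ modified on finitely many terms, or convolutions thereof), and Salas's weights are of this slowly-varying flavour.

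The third step is purely bookkeeping: with $\cH$ in hand satisfying (a)--(c), Proposition~\ref{prop:compact_corona} applies — $\cH$ is a normalized complete Pick space of continuous functions on the compact set $\ol{\bD}$, separating points, with $\ol{\bD}$ a maximal domain — so the failure of the one-function corona theorem is equivalent to $\Mult(\cH)$ containing a non-invertible non-vanishing element, which is exactly what Salas's example furnishes; and then the equivalence in Proposition~\ref{prop:compact_corona} also gives $\cM(\Mult(\cH)) \supsetneq \ol{\bD}$. I expect the main obstacle to be entirely in step one/two: exhibiting one weight sequence that is simultaneously complete-Pick-admissible \emph{and} within the reach of Salas's construction. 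A clean way to handle this is to note that Salas's construction has some slack (one may pass to a subsequence-type or averaged version of his weights) and that the complete Pick property is stable under the relevant operations, so a careful but routine interleaving argument closes the gap; the rest is an appeal to the already-established Proposition~\ref{prop:compact_corona}.
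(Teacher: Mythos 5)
There is a genuine gap, and it lies in your second step: you have the content of Salas's example backwards. Salas answered Shields' Question 15 by constructing a weighted shift, with weights $w_n$ decreasing to $1$ and $\sum_n \beta(n)^{-2}<\infty$, that is \emph{not strictly cyclic}; by \cite[Proposition 31]{Shields74} this says precisely that $\Mult(\cH)\subsetneq\cH$ for the associated weighted space. He does \emph{not} exhibit a non-vanishing, non-invertible multiplier, and he certainly does not work in a setting where $\Mult(\cH)=\cH$. Indeed your claim is self-defeating given the very proposition you cite: if $\Mult(\cH)=\cH$, then Proposition~\ref{prop:compact_corona} (i)$\Rightarrow$(ii) forces $\cM(\Mult(\cH))=\ol{\bD}$, so no such multiplier could exist. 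The actual argument runs in the opposite direction: from Salas one gets only $\Mult(\cH)\subsetneq\cH$, and the failure of the one-function corona theorem is then \emph{deduced} from this via the contrapositive of (iii)$\Rightarrow$(i) in Proposition~\ref{prop:compact_corona} --- which in turn rests on Theorem~\ref{thm:smirnov}: if every non-vanishing multiplier were invertible, then writing any $f\in\cH$ as $\varphi/\psi$ with $\psi$ a non-vanishing multiplier would put $f$ in $\Mult(\cH)$, contradicting $\Mult(\cH)\neq\cH$. No explicit corona-violating multiplier is ever produced; its existence is the indirect conclusion, and this is exactly where the Smirnov theorem is needed.

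Your worry about ``reconciling'' the complete Pick condition with Salas's hypotheses by perturbing his weights is also misplaced, and the vagueness there would not survive scrutiny anyway: no modification is needed. Since Salas's weights $w_n$ are decreasing, the kernel coefficients $a_n=\beta(n)^{-2}$ satisfy $a_n/a_{n+1}=w_n^2$ decreasing, i.e.\ $(a_n)$ is log-convex, so Kaluza's lemma (\cite[Lemmas 7.31 and 7.38]{AM02}) gives the complete Pick property for the kernel exactly as Salas built it; the normalization $a_0=1$, the summability $\sum_n a_n<\infty$, the radius of convergence $1$ (from $a_n/a_{n+1}\to 1$), and hence the maximal-domain and point-separation statements via Example~\ref{exa:H_s}, all come for free. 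Your third, ``bookkeeping'' step is the right framework, but it only closes the argument once step two is corrected to: Salas gives $\Mult(\cH)\subsetneq\cH$, and Proposition~\ref{prop:compact_corona} converts this into the failure of the one-function corona theorem.
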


\begin{proof}
  In \cite{Salas81}, Salas constructs a weighted shift $T$ on $\ell^2$
  with weight sequence $(w_n)$ which satisfies
  \begin{enumerate}
    \item $w_n$ is decreasing and $\lim_{n \to \infty} w_n = 1$.
    \item $\sum_{n=0}^\infty \beta(n)^{-2} < \infty$, where $\beta(n) = \prod_{j=0}^{n-1} w_j$.
    \item $T$ is not strictly cyclic.
  \end{enumerate}
  The original definition of strict cyclicity can be found in \cite[Section 9]{Shields74};
we shall give an equivalent one.
Define a Hilbert space $\cH$ by
  \begin{equation*}
    \cH = \Big\{ f(z) = \sum_{n=0}^\infty \widehat f(n) z^n : ||f||^2 = \sum_{n=0}^\infty |\widehat f(n)|^2
  \beta(n)^2 < \infty \Big\}.
  \end{equation*}
Property (2) implies that $\cH$ is in fact
  a reproducing kernel Hilbert space on $\ol{\bD}$ whose reproducing kernel is given by
  \begin{equation*}
    k(z,w) = \sum_{n=0}^\infty a_n (z \ol{w})^n,
  \end{equation*}
  where $a_n = \beta(n)^{-2}$, see \cite[Section 6]{Shields74}.
Property (3) is equivalent to saying that $\Mult(\cH) \subsetneq \cH$,
see \cite[Proposition 31]{Shields74}.

We have $a_0 = 1$,
  so  $k$ is normalized at $0$.
  Moreover,
  \begin{equation*}
    \frac{a_n}{a_{n+1}} = \frac{\beta(n+1)^2}{\beta(n)^2} = w_n^2,
  \end{equation*}
  hence Property (1) implies that $a_n / a_{n+1}$ decreases to $1$. An application
  of a lemma of Kaluza (see Lemma 7.31 and 7.38 in \cite{AM02}) shows that $\cH$ is a
  complete Pick space.
  Moreover, we see that the radius of convergence of the power series
  $\sum_{n=0}^\infty a_n t^n$ is $1$, hence $\cH$ is a
  space of continuous functions on $\ol{\bD}$, which is a maximal domain for $\cH$,
  and $\cH$ separates the points of
  $\ol{\bD}$ (see Example \ref{exa:H_s}).
  Since $\Mult(\cH) \subsetneq \cH$,
  the implication (iii) $\Rightarrow$ (i) of Proposition \ref{prop:compact_corona} shows
  that the one-function corona theorem fails for $\Mult(\cH)$.
\end{proof}

We call a space $\cH$ on $\ol{\bD}$ as in the statement of Theorem \ref{thm:corona} a \emph{Salas space}.

\begin{rem}
  \label{rem:salas_properties}
  We can say slightly more about the maximal ideal space of the multiplier algebra of a Salas space $\cH$.
  It is easy to check (see, for example, \cite[Section 8]{Hartz15}) that there exists a continuous map
  \begin{equation*}
    \pi : \cM(\Mult(\cH)) \to \ol{\bD}, \quad \rho \mapsto \rho(z).
  \end{equation*}
  Since $\lim_{n \to \infty} a_n / a_{n+1} = 1$,
  it follows from \cite[Proposition 8.5]{Hartz15} that if $\lambda \in \bD$, then
  $\pi^{-1}(\lambda)$ is the singleton containing the character of evaluation at $\lambda$. In particular,
  every character on $\Mult(\cH)$ which is not given by evaluation at a point in $\ol{\bD}$
  is contained in $\pi^{-1}(\partial \bD)$.
  Since there exists at least one such character by Theorem \ref{thm:corona},
  and since $\cH$ is rotationally invariant, we deduce
  that $\pi^{-1}(\lambda)$ contains at least one such character for every $\lambda \in \partial \bD$.

  Moreover, since $\Mult(\cH)$ has no non-trivial idempotent elements, the \v{S}ilov idempotent theorem (see \cite[Theorem 3.5.1]{Kaniuth09}) shows that $\cM(\Mult(\cH))$ is connected.
\end{rem}

\begin{rem}
  Let $\cA$ be a unital Banach algebra of continuous functions on $\ol{\bD}$ such that the polynomials
  form a dense subspace of $\cA$ and such that the maximal ideal space of $\cA$ is equal to $\ol{\bD}$.
  In \cite[Section 2]{ENZ98}, the following problem is studied: given $\delta > 0$, does there exist
  a constant $C(\delta) > 0$ such that for all $f \in \cA$ with $||f||_{\cA} \le 1$ and
  $\inf_{z \in \ol{\bD}} |f(z)| \ge \delta$, the estimate
  \begin{equation*}
    ||f^{-1}||_{\cA} \le C(\delta)
  \end{equation*}
  holds? The authors of \cite{ENZ98} obtain a positive answer for rotationally
  invariant algebras $\cA$ which satisfy some additional assumptions.

  We can use a Salas space to obtain a rotationally invariant algebra for which the question above has
  a negative answer. To this end, let $\cH$ be a Salas space and let $A(\cH)$ denote the norm
  closure of the polynomials inside of $\Mult(\cH)$. The map $\pi$ of Remark \ref{rem:salas_properties}
  shows that the maximal ideal space of $A(\cH)$ is $\ol{\bD}$. By Theorem \ref{thm:corona},
  there exists a multiplier $\varphi$ in the unit ball of $\Mult(\cH)$
  such that $\delta = \inf_{z \in \ol{\bD}} |\varphi(z)| > 0$, but such that $\varphi$ is not invertible
  inside of $\Mult(\cH)$. For $r \in (0,1)$, define $\varphi_r(z) = \varphi(r z)$. Then each $\varphi_r$
  is analytic in an open neighborhood of $\ol{\bD}$, so since $\sigma_{A(\cH)}(z) = \ol{\bD}$, we conclude that
  $\varphi_r \in A(\cH)$ for all $r \in (0,1)$.
  Clearly, $|\varphi_r|$ is bounded below by $\delta$ for all $r \in (0,1)$, so
  each $\varphi_r$ is invertible inside of $A(\cH)$ by Gelfand theory. Moreover,
  a routine application of the Poisson kernel, combined with rotational invariance of $\cH$,
  shows that $||\varphi_r||_{A(\cH)} \le ||\varphi||_{\Mult(\cH)} \le 1$ for all $r \in (0,1)$.

  We claim that $||\varphi_r^{-1}||_{A(\cH)}$ is not bounded as $r \to 1$. Indeed, suppose otherwise.
  Then by weak-$*$ compactness of the closed unit ball of $\Mult(\cH)$, the net $(\varphi_{r}^{-1})_{r < 1}$
  has a weak-$*$ cluster point $\psi \in \Mult(\cH)$. In particular, $\varphi_{r}^{-1}$ converges to $\psi$
  pointwise on $\ol{\bD}$, so that $\psi  = \varphi^{-1}$, contradicting the fact that $\varphi$ is not invertible
  inside of $\Mult(\cH)$.
\end{rem}

If $\cH$ is a Salas space, then the polynomials are not norm dense in $\Mult(\cH)$, since the corona
theorem fails for $\Mult(\cH)$. However, besides this and Remark \ref{rem:salas_properties}, we know very little
about the size of $\Mult(\cH)$ or of its maximal ideal space.

\begin{quest}
  Let $\cH$ be a Salas space on $\ol{\bD}$.
  \begin{enumerate}
    \item Is $\Mult(\cH)$ separable?
    \item Is $\cM(\Mult(\cH))$ metrizable?
    \item Is the cardinality of $\cM(\Mult(\cH))$ equal to that of the continuum?
  \end{enumerate}
\end{quest}

Observe that a positive answer to any of these questions implies
a positive answer to the questions below it.

Recall that $H^2_d$ denotes the Drury-Arveson space on $\bB_d$, the
open unit ball in a Hilbert space of dimension $d$.
If $d = \aleph_0$, we simply write $\bB_\infty$ and $H^2_\infty$.
In the case $d < \infty$, Costea, Sawyer and Wick \cite{CSW11} showed that the corona theorem
holds for $\Mult(H^2_d)$. Fiang and Xia \cite{FX13} provide
a more elementary proof of the one-function corona theorem in this case; an even shorter
proof was found by Richter and Sunkes \cite{RS16}. But none of these proofs
extend to infinite $d$ in a straightforward manner.
Let $\cH$ be a Salas space.
It follows from Theorem \ref{thm:AM} of Agler and McCarthy that $\Mult(\cH)$
can be identified with
\begin{equation*}
  \{\varphi \big|_V: \varphi \in \Mult(H^2_\infty) \}
\end{equation*}
for a set $V \subset \bB_\infty$ (indeed, we can choose $V= b(\ol{\bD})$, where $b$
is the map from Theorem \ref{thm:AM}).
We are not aware of an argument which would show that the failure of the corona
theorem for $\Mult(\cH)$ implies the failure of the corona theorem for $\Mult(H^2_\infty)$.
We therefore ask:

\begin{quest}
  Does the corona theorem hold for $\Mult(H^2_\infty)$? Does the one-function corona theorem hold for $\Mult(H^2_\infty)$?
\end{quest}

\bibliographystyle{amsplain}
\bibliography{literature}

\end{document}